\renewcommand{\P}{ \ensuremath{\mathbb{P}}}
\title{Convex bodies appearing as Okounkov bodies of divisors}
\theoremstyle{plain}
\newtheorem*{theoremA}{Theorem A}
\newtheorem*{theoremB}{Theorem B}
\newtheorem{theorem}{Theorem}[section]
\newtheorem{proposition}[theorem]{Proposition}
\newtheorem{corollary}[theorem]{Corollary}
\theoremstyle{definition}
\theoremstyle{remark} 
\newtheorem{remark}[theorem]{Remark} 
\newtheorem{example}[theorem]{Example}
\def\Z{{\mathbb Z}} 
\def\N{{\mathbb N}}
\def\R{{\mathbb R}} 
\def\Q{{\mathbb Q}} 
\def\P{{\mathbb P}} 
\def\OO{{\mathcal O}}
\newcommand{\deq}{\ensuremath{ \stackrel{\textrm{def}}{=}}} 
\newcommand{\equ}{\ensuremath{ \,=\, }}
\def\rat{\dashrightarrow}
\DeclareMathOperator{\Supp}{Supp}
\begin{document}
\author{Alex K\"uronya}
\address{Budapest University of Technology and Economics, Department of Algebra, Budapest P.O. Box 91, H-1521 Hungary}
\email{{\tt alex.kuronya@math.bme.hu}}

\author{Victor Lozovanu}
\address{University of Michigan, Department of Mathematics, Ann Arbor, MI 48109-1109, USA}
\email{{\tt vicloz@umich.edu}}

\author{Catriona Maclean}
\address{Universit\'e Joseph Fourier, UFR de Math\'ematiques, 100 rue des Maths, BP 74, 38402 St Martin d'H\'eres, France}
\email{\tt Catriona.Maclean@ujf-grenoble.fr}

\begin{abstract}
Based on the work of Okounkov (\cite{Ok96}, \cite{Ok03}), Lazarsfeld and 
Musta\c t\u a (\cite{LM08}) and Kaveh and Khovanskii (\cite{KK08}) have
independently associated a convex body, called the Okounkov 
body, to a big divisor on a smooth projective variety with respect to a 
complete flag. In this paper we consider the following question: what can be 
said about the set of convex bodies that appear as Okounkov bodies? We show 
first that the set of convex bodies appearing as Okounkov bodies of 
big line bundles on smooth projective varieties with respect to  
admissible flags is countable. We then give a complete characterisation of the
set of convex bodies that arise as Okounkov bodies of $\R$-divisors on 
smooth projective surfaces. Such Okounkov bodies are always polygons, 
satisfying certain combinatorial criteria. Finally, we construct two examples of
non-polyhedral Okounkov bodies. In the first one, the variety we deal with is Fano 
and the line bundle is ample. In the second one, we find a Mori dream space variety
such that under small perturbations of the flag the Okounkov body remains non-polyhedral.
\end{abstract}

\thanks{During this project the first author was partially supported by  the CNRS, the  DFG-Leibniz program, the SFB/TR 45 ``Periods, moduli spaces and arithmetic of algebraic varieties'',  the OTKA Grants 61116, 77476, 77604, and a Bolyai Fellowship by the Hungarian Academy of Sciences. }

\maketitle

\section*{Introduction}

Let $X$ be a smooth complex projective variety of dimension $n$ and let  
$\mathcal{L}$ be a big line bundle on $X$. Suppose given a flag
\[
Y_{\bullet} \ : \ X=Y_0 \ \supseteq \ Y_1 \ \supseteq \ Y_2 \ \supseteq \ 
\ldots \ \supseteq \ Y_{n-1} \ \supseteq 
\ Y_n=\{ \textup{pt} \}
\] 
of irreducible and smooth subvarieties on $X$ with $\textup{codim}_X(Y_i)=i$. 
We call this an admissible flag. In \cite{LM08}, Lazarsfeld and 
Musta\c t\u a, inspired by the work of Okounkov (\cite{Ok96}, \cite{Ok03}), 
construct a convex body $\Delta _{Y_{\bullet}}(X;\mathcal{L})$ in $\R^n$, 
called the Okounkov body, associated to $\mathcal{L}$ and $Y_{\bullet}$. 
This body  encodes the asymptotic behaviour of the linear series 
$|\mathcal{L}^{\otimes n}|$. Lazarsfeld and  Musta\c t\u a link its properties to the 
geometry of $\mathcal{L}$. For example, because here $\mathcal{L}$ is big we have 
that
\[
\textup{vol}_X(\mathcal{L}) \ = \ n! \cdot \textup{vol}_{\R^n} 
(\Delta_{Y_{\bullet}}(X;\mathcal{L}))
\]
where the right hand side is the Euclidean volume of $\Delta_{Y_{\bullet}}
(X;\mathcal{L})$. This enabled Lazarsfeld and Musta\c t\u a to 
simplify the proofs of many basic properties of volumes of line bundles.

We recall the construction of $\Delta_{Y_\bullet}(X;\mathcal{L})$. 
To any effective divisor $D$ on $X$ we associate an integral vector 
\[
\nu_{Y_{\bullet}} (D) \ = \ (\nu_1(D), \ldots ,\nu_n(D)) \in \N^n
\]
defined as follows. We recursively construct numbers $\nu_{i}(D)$ and divisors 
$D_i$ on $Y_i$ in the following manner: 
\begin{enumerate}
\item $D_0= D$, 
\item $\nu_i(D)$ is  the coefficient of $Y_{i}$ in $D_{i-1}$, 
\item $D_{i}= (D_{i-1}- \nu_iY_i)|_{Y_i}$. 
\end{enumerate}
We now set
\[
\Gamma_{Y_{\bullet}}(X;\mathcal{L})_m \ = \ \Big\{ \nu_{Y_{\bullet}}(D) \ | 
\ D=\mbox{zero}(s)\mbox{ for some }0\neq s\in H^0(X,\mathcal{L}^{\otimes m})\Big\}
\subseteq \N^n
\]
 for any $m\in\N$. The Okounkov body $\Delta_{Y_{\bullet}}(X;\mathcal{L})$ is 
then given by 
\[
\Delta_{Y_{\bullet}}(X;\mathcal{L}) \ = \ \text{closed convex hull} 
\Big( \bigcup_{m\geq 1} \frac{1}{m}\Gamma_{Y_{\bullet}}(X;\mathcal{L})_m\Big) 
\subseteq \R^n \ .
\]
If $D$ is a (possibly non-rational) Cartier divisor on $X$ then we define 
$\Delta_{Y_{\bullet}}(X; D)$ as follows:
\[
\Delta_{Y_{\bullet}}(X; D)=  \overline{\{\nu(D') \ | \ D'\geq 0, D'\sim_{\R} D\}},
\]
which is simply $\Delta_{Y_\bullet}(X;\mathcal{O}_X(D))$ when
$D$ is an integral divisor.

In this paper we study the set of convex  bodies appearing as Okounkov bodies of line bundles on smooth projective varieties with 
respect to some admissible flag. Our first result, proved in Section 1, 
shows that this set is countable. 

\begin{theoremA}\label{theoremA}
The collection of all Okounkov bodies is countable. That is, for any natural number $n\geq 1$, there exists a countable set of bounded convex bodies $(\Delta_i)_{i\in\N}\subset \R^n$ such that for any complex smooth projective variety $X$ of dimension $n$, any big
line bundle $\mathcal{L}$ on $X$ and any admissible flag $Y_{\bullet}$ on $X$, the body $\Delta_{Y_{\bullet}}(X;\mathcal{L}) = \Delta_i$ for some $i\in\N$.
\end{theoremA}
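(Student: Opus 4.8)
The plan is to exhibit an explicit countable list of "parameter data" and show that the Okounkov body is determined by such data together with finitely many algebraic choices, of which there are countably many. First I would observe that any smooth projective variety $X$ of dimension $n$ together with a big line bundle $\mathcal{L}$ can be packaged as the data of a projective embedding: after twisting by a fixed very ample bundle we may write $\mathcal{L}$ as a difference $A - B$ with $A, B$ very ample, and then $X \hookrightarrow \P^N$ is cut out by an ideal generated by finitely many polynomials with, say, $\overline{\Q}$ coefficients — indeed one can use the fact that any such $(X,\mathcal{L})$ is defined over $\overline{\Q}$ (spreading out / a standard Lefschetz-principle argument), and $\overline{\Q}$ is countable. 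Likewise the admissible flag $Y_\bullet$ is cut out by finitely many further equations over $\overline{\Q}$. So the triple $(X, \mathcal{L}, Y_\bullet)$ ranges, up to the isomorphisms preserving all this structure, over a countable set of "arithmetic" descriptions.

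The heart of the argument is then that the Okounkov body depends only on this arithmetic description and not on the complex embedding one chooses, i.e.\ the relevant dimensions $h^0(X, \mathcal{L}^{\otimes m})$ and, more importantly, the valuation vectors $\nu_{Y_\bullet}(D)$ for $D = \mathrm{zero}(s)$ with $s \in H^0(X, \mathcal{L}^{\otimes m})$ are insensitive to field automorphisms of $\C$ fixing the field of definition. Concretely, for each $m$ the set $\Gamma_{Y_\bullet}(X;\mathcal{L})_m \subseteq \N^n$ is a finite subset of $\N^n$ whose description is purely in terms of vanishing orders of sections along the $Y_i$; these orders can be computed on the model over $\overline{\Q}$, so $\Gamma_{Y_\bullet}(X;\mathcal{L})_m$ is determined by the arithmetic data. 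Since $\Delta_{Y_\bullet}(X;\mathcal{L})$ is the closed convex hull of $\bigcup_m \frac{1}{m}\Gamma_{Y_\bullet}(X;\mathcal{L})_m$, it too is determined by the arithmetic data. Hence the assignment (arithmetic datum) $\mapsto$ (Okounkov body) is a function from a countable set, and its image — the collection of all Okounkov bodies of dimension $n$ — is countable.

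In writing this up I would be careful about two technical points. First, the descent step: one must justify that every pair $(X,\mathcal{L})$ with $X$ smooth projective over $\C$ and $\mathcal{L}$ big, equipped with an admissible flag, arises by base change $\C \otimes_K (-)$ from such data over a subfield $K$ finitely generated over $\Q$ — hence, after further specialization, from data over $\overline{\Q}$ — in a way that preserves bigness, smoothness of $X$ and of the $Y_i$, and the flag incidences. This is standard "spreading out" (EGA IV §8–9): every finite-type scheme and coherent sheaf descends to a finitely generated subring, smoothness and properness are constructible conditions, and bigness of $\mathcal{L}$ is detected by $h^0(X,\mathcal{L}^{\otimes m}) \gtrsim m^n$ for a single large $m$, an arithmetic condition. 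One then further notes a variety over a finitely generated field embeds, after the choice of an embedding of that field into $\C$ and a reduction, into one over $\overline{\Q}$; alternatively, work directly over finitely generated subfields of $\C$, of which there are countably many up to isomorphism. Second, one checks that the Okounkov body is unchanged under extension of the base field — that $H^0$ commutes with the base change $\C \otimes_{\overline{\Q}} (-)$ and that divisors of sections, their coefficients along the $Y_i$, and the successive restrictions in the definition of $\nu_{Y_\bullet}$ are all compatible with flat base change. Both points are routine but must be stated. The main obstacle, and the only place requiring genuine care, is the descent/spreading-out step: making sure that the admissible flag condition (irreducibility and smoothness of each $Y_i$, and the strict codimension) is preserved under the specialization to $\overline{\Q}$, since irreducibility is only a constructible — not obviously generic — condition unless one is slightly careful about which geometric fibre one lands in.
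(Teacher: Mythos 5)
Your central reduction is not correct as stated: it is false that every pair $(X,\mathcal{L})$ with an admissible flag is defined over $\overline{\Q}$ (an elliptic curve with transcendental $j$-invariant already fails this), and the patch you suggest --- spread out over a finitely generated $K\subset\C$ and then ``specialize to $\overline{\Q}$'' --- does not work either, because specializing to a $\overline{\Q}$-point of the spreading-out family replaces $X$ by a different fibre, and the Okounkov body is \emph{not} invariant under such specialization. This invariance problem is precisely the substance of the theorem: in a flat family of flagged varieties the images $\textup{Im}(\nu_{Y_{t,\bullet}})$, and hence the bodies, are only known to be constant at \emph{very general} $t$, i.e.\ off a countable union of proper closed subsets (\cite[Theorem 5.1]{LM08}); special fibres, such as the $\overline{\Q}$-points you want to land on, may have different bodies. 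The paper's proof is organized exactly around this difficulty: embed $X$ in $(\P^{2n+1})^{\rho}$ so that $N^1(X)$ is trivialized integrally, parametrize flagged subvarieties by countably many multigraded Hilbert schemes, apply \cite[Theorem 5.1]{LM08} over each irreducible base to get constancy away from countably many proper closed subsets, and then induct on the dimension of the base to absorb the special loci. Your argument has no substitute for this step.

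Your parenthetical alternative --- work with models over finitely generated subfields $K\subset\C$, of which there are countably many up to abstract isomorphism, and show the body depends only on the $K$-model --- can be repaired into a correct proof, and it would be genuinely different from (and more elementary than) the paper's, since it trades the Hilbert-scheme family argument for invariance under base field extension, which is easier than invariance under specialization. But the justification you give for that invariance is insufficient: the issue is not whether $\Gamma_{Y_\bullet}(X;\mathcal{L})_m$ is moved by automorphisms of $\C$ over $K$ (a set of integer vectors is trivially fixed by them), but whether a section defined over $\C$ and not over $K$ could produce a valuation vector not attained by any $K$-rational section. To rule this out you need the counting argument: after enlarging $K$ so that $Y_n$ is $K$-rational and each $Y_i$ is geometrically irreducible, $\nu_{Y_\bullet}$ has one-dimensional leaves, so $\#\Gamma_m = \hh{0}{X}{\mathcal{L}^{\otimes m}}$ both over $K$ and over $\C$; since $h^0$ is preserved by flat base change and every $K$-valuation vector is also a $\C$-valuation vector, the two sets coincide, independently of the chosen embedding $K\hookrightarrow\C$. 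With that supplied, together with the spreading-out bookkeeping you mention, countability follows; as written, however, the proof has a genuine gap at the $\overline{\Q}$/specialization step and an unjustified step at the base-change invariance.
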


The proof of Theorem A is similar to the proof of the countability of
volume functions given in \cite{KLM09}. It was established in \cite{LM08} that
for a variety $X$ equipped with a flag $Y_\bullet$ the Okounkov bodies of 
big real classes on $X$ with respect to $Y_{\bullet}$ fit together in a convex
cone, called the global Okounkov cone. We prove Theorem A by  
analysing the variation of global Okounkov cones in flat families. 

The question then naturally arises whether this countable set of convex bodies
can be characterised. We give an affirmative answer for surfaces.
An explicit description of $\Delta(D)$ for any real divisor $D$ on a smooth surface
$S$ with respect to a flag $(C,x)$ based on the Zariski decomposition 
is given in \cite[Theorem 6.4]{LM08}. It was noted that it followed from this description that the Okounkov body was a  possibly infinite polygon. We give a complete characterisation of Okounkov 
bodies on surfaces based on this work: these turn out to be finite polygons 
satisfying a few extra combinatorial conditions.

\begin{theoremB}\label{theoremB}
The Okounkov body of an $\R$-divisor on a smooth projective surface with respect
to some flag is a finite polygon. 
Up to translation, a real polygon $\Delta\subseteq\R^{2}_+$ is the Okounkov 
body of an $\R$-divisor $D$ on a smooth projective surface $S$ 
with respect to a complete flag $(C,x)$ if and only if 
\[ 
\Delta = \{ (t,y)\in \mathbb{R}^2\ | \ \nu\leq t\leq \mu, \alpha(t)\leq y\leq 
\beta(t)\}\ 
\] 
for certain real numbers  $0\leq \nu \leq \mu$ and certain continuous piecewise linear functions
$\alpha , \beta :[\nu, \mu ] \rightarrow \R_{+}$ with rational slopes such that
$\beta$ is concave and $\alpha$ is increasing and convex.
\end{theoremB}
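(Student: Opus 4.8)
My plan is to prove the two implications separately, both resting on the description of $\Delta_{(C,x)}(S;D)$ via Zariski decomposition in \cite[Theorem 6.4]{LM08}. Set $\mu = \mu(D;C) = \sup\{s > 0 \mid D - sC \text{ is big}\}$, let $\nu$ be the coefficient of $C$ in the negative part of the Zariski decomposition of $D$, and for $t \in [\nu,\mu)$ write $D - tC = P_t + N_t$ for the Zariski decomposition. Then by \cite[Theorem 6.4]{LM08},
\[
\Delta_{(C,x)}(S;D) \equ \{(t,y) \mid \nu \le t \le \mu,\ \alpha(t) \le y \le \beta(t)\},\qquad \alpha(t) = \operatorname{mult}_x(N_t|_C),\quad \beta(t) = \alpha(t) + P_t\cdot C .
\]
As $\Delta_{(C,x)}(S;D)$ is empty or lower-dimensional unless $D$ is big, I may assume $D$ big (the degenerate cases being immediate). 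What remains beyond \cite{LM08} is: (i) $\alpha,\beta$ have finitely many linear pieces; (ii) their slopes are rational; (iii) $\beta$ is concave while $\alpha$ is increasing and convex; (iv) the converse construction.

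For necessity: intersecting with an ample class shows $D - tC$ cannot remain pseudoeffective, so $\mu < \infty$, and $D - \mu C$, lying on the boundary of the big cone, is pseudoeffective with Zariski decomposition $D - \mu C = P_\mu + N_\mu$. For every $t \in [\nu,\mu)$ the identity $D - tC = P_\mu + \big(N_\mu + (\mu - t)C\big)$ presents $D - tC$ as nef plus effective, and since the negative part of such a sum is supported on the effective summand (by subadditivity of the negative part together with $N_\sigma(E)\le E$ for effective $E$), one gets $\Supp(N_t)\subseteq \{C\}\cup\Supp(N_\mu)$, a \emph{fixed finite} set. Hence $t\mapsto\Supp(N_t)$ takes finitely many values on $[\nu,\mu)$, and it changes value only finitely often, since each change occurs where the segment $t\mapsto D-tC$ crosses one of the finitely many hyperplanes separating the Zariski chambers with negative support in $\{C\}\cup\Supp(N_\mu)$, and a segment crosses a hyperplane at most once. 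On each resulting subinterval $N_t$, hence also $P_t$, is affine-linear in $t$; so $\alpha$ and $\beta$ are continuous piecewise linear with finitely many pieces and $\Delta$ is a finite polygon, giving (i). For (ii): on such a subinterval the coefficients of $N_t = \sum_i n_i(t)E_i$ solve $(D - tC - N_t)\cdot E_i = 0$, a linear system with integral Gram matrix $(E_i\cdot E_j)$ and right-hand side $(D\cdot E_i) - t(C\cdot E_i)$ of integral $t$-derivative; so the $n_i(t)$ have rational slope (their constant terms may be irrational), and since $(E_i\cdot C)_x$, $(E_i\cdot C)$ and $C^2$ are integers so do $\alpha$ and $\beta$. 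For (iii): $\Delta$ is convex by construction, which for a region of the above shape is equivalent to $\alpha$ convex and $\beta$ concave; and $\alpha$ is increasing because $D - tC = P_{t'} + \big(N_{t'} + (t'-t)C\big)$ gives $N_t \le N_{t'} + (t'-t)C$ for $\nu\le t\le t' < \mu$, while $C$ does not occur in $N_t$ once $t\ge\nu$, so $N_t\le N_{t'}$ and thus $\operatorname{mult}_x(N_t|_C)\le\operatorname{mult}_x(N_{t'}|_C)$, extending to $t'=\mu$ by continuity.

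For the converse (iv), given a polygon $\Delta$ of the stated form I would build $S$ as an iterated blow-up of a minimal rational surface, together with a big divisor $D$ and a flag $(C,x)$, such that the functions assigned to $D$ by \cite[Theorem 6.4]{LM08} are exactly $\alpha$ and $\beta$. Fix a common subdivision $\nu = s_0 < \cdots < s_m = \mu$ refining the breakpoints of $\alpha$ and $\beta$. Starting from a ruled surface and a fibre $C$ (producing the width and the extreme linear pieces), I would cross the walls $s_1,\dots,s_{m-1}$ in turn, at each one blowing up a suitable point of the strict transform of $C$ — the point $x$ when the slope of $\alpha$ must change, a point of $C\setminus\{x\}$ when that of $\beta$ must change — and performing further infinitely near blow-ups so that the self-intersection of the new exceptional curve, together with its intersection numbers with $C$ (all integers), yields via the linear system above precisely the prescribed rational slope; the fixed positive part of $D$ is taken large enough that $P_t$ stays nef on $[\nu,\mu]$. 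One then checks piece by piece that the Zariski decomposition of $D - tC$ is the one just constructed, so that $\alpha,\beta,\nu,\mu$ are as wanted and $\Delta_{(C,x)}(S;D)$ is a translate of $\Delta$.

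The hard part is this converse construction. One must verify that every admissible datum — $\nu\le\mu$ together with the breakpoint positions, the weakly increasing sequence of rational slopes of $\alpha$, and the weakly decreasing one of $\beta$ — is simultaneously realizable: that the successive blow-ups can be arranged so each new exceptional curve enters the negative part at exactly the right parameter and with exactly the right slope, that the curves governing $\alpha$ (meeting $C$ at $x$) and those governing $\beta$ (meeting $C$ elsewhere) do not interfere, and that the positive part remains nef throughout; rationality of the slopes is exactly the arithmetic condition making this possible, and ``$\alpha$ increasing and convex, $\beta$ concave'' is exactly what yields a consistent choice of blow-up centres and self-intersections. A further point needing care is the degenerate cases — $\nu = \mu$, $\alpha\equiv\beta$, or $D$ only pseudoeffective — where $\Delta$ collapses to a segment or a point and must be realized by a limiting configuration.
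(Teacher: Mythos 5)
Your necessity half is essentially the paper's own argument (its Proposition 2.2): minimality of the negative part of the Zariski decomposition gives $\Supp(N_t)\subseteq\{C\}\cup\Supp(N_\mu)$, on intervals of constant support the coefficients of $N_t$ solve a linear system with integral Gram matrix and hence are affine in $t$ with rational slopes, and monotonicity of $t\mapsto N_t$ gives that $\alpha$ is increasing. Two points need tightening. First, you assert but never prove that $C\not\subseteq\Supp(N_t)$ for $t\geq\nu$, which you need both for $N_t|_C$ (hence $\alpha$) to be defined and to pass from $N_t\leq N_{t'}+(t'-t)C$ to $N_t\leq N_{t'}$; the paper's argument is that if $C$ occurred in $N_t$ for some $t\in[\nu,\mu]$ it would persist in the negative part as $t$ decreases to $\nu$, contradicting that the coefficient of $C$ in $N(D)$ is exactly $\nu$. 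Second, your finiteness-of-breakpoints argument via walls of the Zariski chamber decomposition of \cite{BKS04} is delicate: that decomposition is only \emph{locally} finite in the interior of the big cone, and your segment approaches the boundary as $t\to\mu$, so an accumulation of walls at $t=\mu$ is not excluded by that citation alone. The clean fix is already in your own write-up: since $t\mapsto N_t$ is non-decreasing and its support is confined to the fixed finite set $\{C\}\cup\Supp(N_\mu)$, the support can only grow and hence changes at most finitely many times -- which is exactly how the paper proves finiteness.

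The converse direction is a genuine gap. You propose to realize a given polygon by an iterated blow-up of a ruled surface, prescribing slopes of $\alpha$ and $\beta$ by choosing blow-up centres on or off $C$, but you explicitly concede that the key verification -- that every admissible datum (arbitrary real breakpoints and values, increasing convex $\alpha$, concave $\beta$, rational slopes) is simultaneously realizable, with the positive parts staying nef and the two families of curves not interfering -- is not carried out; as written, nothing in the sketch controls this, so the ``if'' half of the theorem is unproved. The paper sidesteps all of this with a toric construction: after translating so that $(0,0)\in\Delta\subseteq\R^2_+$, the rational slopes give primitive integral inner normals $v_i$ to the edges; because $\alpha$ is increasing none of these lies in the open first quadrant, so the rays can be completed to a smooth complete fan whose rays include $(1,0)$ and $(0,1)$, and one writes $\Delta=\{u\mid \langle u,v_i\rangle+a_i\geq 0\}$ with $a_{i_1}=a_{i_2}=0$ for the two coordinate normals and real $a_i\geq 0$ otherwise. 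Taking $D=\sum a_iD_i$ and the $T$-invariant flag $C=D_{i_1}$, $x=D_{i_1}\cap D_{i_2}$, \cite[Proposition 6.1]{LM08} identifies $\Delta_{(C,x)}(S;D)$ with the polytope $P(D)=\Delta$; real coefficients are allowed, so $\R$-divisors and the degenerate cases ($\nu=\mu$, $\alpha\equiv\beta$) are handled uniformly. To complete your proof you should either carry out your blow-up construction in full detail (a substantial task) or replace it by this toric argument.
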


When the divisor $D$ is in fact a $\Q$-divisor, the break-points of the
functions $\alpha$ and $\beta$ occur at rational points and the number
$\nu$ must be rational. We also show that the number $\mu$ might be irrational, but it satisfies a quadratic equation over $\Q$. We have not been able to establish which quadratic irrationals 
arise this way: Remark \ref{seshadri} links this problem to the irrationality of 
Seshadri constants.

Theorem B is proved using Zariski decomposition as in 
\cite[Theorem 6.4]{LM08}. More precisely, Theorem B is established via a detailed 
analysis of the variation of Zariski decomposition along a line segment. 
Conversely, we show that all convex bodies as in Theorem B are Okounkov bodies 
of divisors on smooth toric surfaces. 

An example of a non-polyhedral Okounkov body in higher dimensions
was given in \cite[Section 6.3]{LM08}, so no simple characterisation of 
Okounkov bodies along the lines of Theorem B can hold in higher dimensions.
However, it is expected that polyhedral Okounkov bodies are related to
finite generation of rings of sections. In \cite{LM08}, Lazarsfeld and Musta\c t\u a asked if every Mori dream space admits a flag with respect to which the global Okounkov cone is polyhedral. In Section 3 we give two examples of 
Mori spaces (one of which is $\P^2\times\P^2$) equipped with flags with respect to which 
most Okounkov bodies are not polyhedral. The second example has the 
advantage that the shape of the Okounkov body in question is stable under generic deformations of the flag.

\subsection*{Acknowledgments.}
Part of this work was done while the first and the second authors were enjoying
the hospitality of the Universit\'e Joseph Fourier in Grenoble. We would like to
take this opportunity to thank Michel Brion and the Department of Mathematics 
for the invitation. We are grateful to Dave Anderson, Sebastien Boucksom, Jos\'e Gonz\'alez, Shin-Yao Jow, Askold Khovanskii, Rob 
Lazarsfeld and Mircea Musta\c t\u a for many helpful discussions.

\section{Countability of Okounkov bodies}

In this section we prove Theorem A using global Okounkov
 cones. Let $X$ be a smooth projective complex variety of dimension $n$ and let
$Y_{\bullet}$ be an admissible flag on $X$. Let $N^1(X)$ be the 
N\'eron-Severi group of $X$, while $N^1(X)_{\R}$ will denote the 
(finite-dimensional) vector space of numerical equivalence classes of $\R$-divisors.

Consider the  additive sub-semigroup of $\N^{n}\times N^1(X)$ 
\[
\Gamma_{Y_{\bullet}}(X) \deq \Big\{ 
(\nu_{Y_{\bullet}}(D), [L]) \ | \ L\mbox{ a line bundle on X with } D\geq 0 \mbox{ and } \mathcal{O}_X(D)\simeq L\Big\}.
\]
The global Okounkov cone of $X$ with respect 
to $Y_{\bullet}$ is then given by 
\[
\Delta_{Y_{\bullet}}(X) \deq  \text{ closed convex cone generated by }
 \Gamma_{Y_{\bullet}}(X) \mbox{ inside } \R^n\times N^1(X)_\R\ .
\]
Theorem B of \cite{LM08} says that for any big class $\xi\in N^1(X)_{\Q}$
we have that 
\[
 \Delta_{Y_{\bullet}}(X)\cap (\R^n\times\{\xi\}) \equ 
\Delta_{Y_{\bullet}}(X;\xi ) \ .
\]
Thus to prove Theorem A, it is enough to show the following claim, which establishes the countability of the set of global Okounkov cones.

\begin{theorem}\label{okounkovbodies}
There exists a countable set of  closed convex cones $\Delta_i 
\subseteq \R^n\times \R^\rho$ with $i\in\N$ with the property that for any smooth, 
irreducible, projective variety $X$ of dimension $n$ and Picard number $\rho$ 
and any admissible flag $Y_{\bullet}$ on $X$, there is an integral linear isomorphism
\[
\psi_X: \R^\rho \ \rightarrow \ N^1(X)_{\R},
\]
depending only on $X$, such that $(\textup{id}_{\R^n}\times\psi^{-1}_X)(\Delta_{Y_{\bullet}}(X))$ is 
equal to $\Delta_i$ for some $i\in\N$.
\end{theorem}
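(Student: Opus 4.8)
The plan is to realize every pair $(X,Y_\bullet)$ as a fibre of one of countably many flat families and then run a Noetherian induction on the dimension of the base, along the lines of the proof of countability of volume functions in \cite{KLM09}. First I would spread out: since Hilbert and relative Hilbert schemes have countably many connected components, each of finite type, and since smoothness of the fibres, smoothness of the members of the flag, and nestedness of the flag are open conditions, there exist countably many data $\bigl(\pi_j\colon\mathcal{X}_j\to T_j,\ \mathcal{Y}_{j,\bullet}\bigr)$ with $T_j$ an irreducible quasi-projective complex variety, $\pi_j$ smooth projective of relative dimension $n$, and $\mathcal{Y}_{j,\bullet}$ a chain of smooth closed subschemes over $T_j$ cutting out an admissible flag on every fibre, such that every $(X,Y_\bullet)$ occurs as some $(\mathcal{X}_{j,t},\mathcal{Y}_{j,\bullet,t})$. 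Over the connected base $T_j$ the groups $H^2(\mathcal{X}_{j,t},\Z)/\mathrm{tors}$ form a local system of constant rank, so fixing a marking --- well defined up to the countable monodromy group --- lets us view each $\Gamma_{\mathcal{Y}_{j,\bullet,t}}(\mathcal{X}_{j,t})$ as a subset of a single \emph{fixed} lattice $\N^n\times\Lambda_j$. Since $N^1(\mathcal{X}_{j,t})/\mathrm{tors}$ embeds as a saturated sublattice of $\Lambda_j$, and there are only countably many pairs consisting of a saturated sublattice of $\Lambda_j$ and a basis of it, the final change of coordinates to $\R^n\times N^1(X)_\R$, the choice of marking, and the requirement that $\psi_X$ depend only on $X$ together cost merely a countable factor. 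Since $\Delta_{Y_\bullet}(X)$ is by definition the closed convex cone generated by $\Gamma_{Y_\bullet}(X)$, it therefore suffices to show that for each fixed family the semigroup $\Gamma_{\mathcal{Y}_{j,\bullet,t}}(\mathcal{X}_{j,t})\subseteq\N^n\times\Lambda_j$ takes only countably many values as $t$ ranges over $T_j$.

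Fix such a family. I would analyse $\Gamma$ through the relative Picard scheme $\mathbf{Pic}(\mathcal{X}_j/T_j)$, which again has countably many connected, finite-type components $\mathbf{P}_k$, on which the first Chern class takes a constant value $\bar\lambda_k\in\Lambda_j$ (up to the monodromy already accounted for). For $q\in\mathbf{P}_k$ lying over $t$, with corresponding line bundle $M_q$ on $\mathcal{X}_{j,t}$, put $V_q\deq\{\,\nu_{\mathcal{Y}_{j,\bullet,t}}(D) : D\geq 0,\ \mathcal{O}(D)\cong M_q\,\}\subseteq\N^n$, so that $\Gamma_{\mathcal{Y}_{j,\bullet,t}}(\mathcal{X}_{j,t})=\bigcup_k\bigcup_{q\in\mathbf{P}_k,\ \pi(q)=t}\bigl(V_q\times\{\bar\lambda_k\}\bigr)$. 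A single component $\mathbf{P}_k$ is easy to control. First, the coordinates of $\nu_{\mathcal{Y}_\bullet}(D)$ for $D$ in a fixed numerical class are bounded above by intersection numbers of that class with complete intersections of a relatively very ample divisor, uniformly over $\mathbf{P}_k$; hence $V_q$ lies in a box depending only on $k$, and so takes only finitely many values on $\mathbf{P}_k$. Second, for a given finite $W\subseteq\N^n$ the locus $\{\,q\in\mathbf{P}_k : V_q=W\,\}$ is cut out by equalities and inequalities among ranks of coherent sheaves on $\mathbf{P}_k$ assembled from spaces of sections vanishing to prescribed orders along the successive $\mathcal{Y}_{j,i}$, hence is constructible. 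Therefore $\mathbf{P}_k=\coprod_{\alpha}\mathbf{P}_{k,\alpha}$ is a finite partition into locally closed pieces on which $V_q\equiv W_{k,\alpha}$, and, writing $U_{k,\alpha}\deq\pi(\mathbf{P}_{k,\alpha})\subseteq T_j$ for the (constructible) image, the contribution of $\mathbf{P}_k$ to $\Gamma_{\mathcal{Y}_{j,\bullet,t}}(\mathcal{X}_{j,t})$ depends on $t$ only through the finite datum $\{\,\alpha : t\in U_{k,\alpha}\,\}$, which is constant off a proper closed subset of $T_j$.

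The one genuine obstacle is that $\Gamma$ is assembled from the \emph{infinitely many} components $\mathbf{P}_k$ --- a reflection of the limiting process over all line bundles that defines the Okounkov body --- and constructibility is not preserved by such countable operations, so one cannot hope for a finite stratification of $T_j$ on which $\Gamma$ is constant. I would get around this by Noetherian induction on $\dim T_j$, the case $\dim T_j=0$ being trivial. For each $k$, choose a dense open $T_k^{\circ}\subseteq T_j$ on which $\{\,\alpha : t\in U_{k,\alpha}\,\}$ is constant, and set $T^{\circ}\deq\bigcap_k T_k^{\circ}$; then $\Gamma_{\mathcal{Y}_{j,\bullet,t}}(\mathcal{X}_{j,t})$ equals one fixed semigroup $\Gamma^{\circ}$ for every $t\in T^{\circ}$, while $T_j\setminus T^{\circ}\subseteq\bigcup_k\bigl(T_j\setminus T_k^{\circ}\bigr)$ is a \emph{countable} union of proper closed subsets, each of which --- being proper and closed in the irreducible variety $T_j$ --- has dimension strictly less than $\dim T_j$. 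Applying the inductive hypothesis to the (finitely many) irreducible components of each of these subsets shows that $\Gamma$ takes only countably many values on $T_j\setminus T^{\circ}$, hence only countably many on $T_j$; so the family contributes only countably many cones, and summing over the countably many families and absorbing the countable $\mathrm{GL}(\Lambda_j)$-bookkeeping yields the list $(\Delta_i)_{i\in\N}$. Apart from this dimension induction, the two points I expect to require the most care are: (a) the existence of a Poincar\'e line bundle on $\mathcal{X}_j\times_{T_j}\mathbf{P}_k$, which is available only after an \'etale base change and so must be folded into the countable accounting --- or bypassed, since the constructibility statements we need are \'etale-local over $\mathbf{P}_k$; and (b) the precise constructibility of the order-of-vanishing data along the flag, which follows from iterated semicontinuity of the dimensions of spaces of sections with prescribed vanishing along the smooth divisors $\mathcal{Y}_{j,i}$.
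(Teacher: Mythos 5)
Your proposal is correct in substance and shares the paper's overall skeleton --- spread every flagged variety out into countably many flat projective families of flags (your flag Hilbert schemes play the role of Proposition~\ref{flagfamilies}, built there from the multigraded Hilbert schemes of \cite{HS04}), prove that the relevant valuation-theoretic data is constant off a countable union of proper closed subsets of each base, and finish by induction on the dimension of the base exactly as in Proposition~\ref{OBrestricted}. Where you genuinely diverge is in the middle. The paper's central device is to embed $X$ into $W=(\P^{2n+1})^{\rho}$ so that restriction of the fixed bundles $\mathcal{O}_W(\underline m)$, $\underline m\in\Z^{\rho}$, induces the integral isomorphism $\psi_X$; this single choice simultaneously produces the map $\psi_X$ "depending only on $X$", reduces the global Okounkov cone to the countably many Okounkov bodies $\Delta_{Y_{\bullet,t}}(X_t;\mathcal{O}_W(\underline m)|_{X_t})$, and allows the generic constancy of the valuation images to be quoted verbatim from \cite[Theorem 5.1]{LM08}, with no Picard schemes, Poincar\'e bundles or monodromy in sight. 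You instead keep an arbitrary embedding, enumerate all line bundles on the fibres through the countably many finite-type components of the relative Picard scheme, identify the N\'eron--Severi groups across fibres via the $H^2$ local system with its countable monodromy, and re-derive the generic constancy by hand from boundedness of valuation vectors in a fixed numerical class plus constructibility of the loci where the vanishing-order data takes a prescribed value. Both of these ingredients are sound (the boundedness argument is the standard one behind compactness of Okounkov bodies; the constructibility follows from cohomology-and-base-change stratification applied to the finitely many prescribed-order subspaces inside your bounding box), and you correctly flag the Poincar\'e-bundle and marking issues as countable bookkeeping; note only that the constancy of the semigroup on your very general subset $T^{\circ}$ is well defined only up to the monodromy of the restricted local system, which is again countable, so nothing is lost. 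The trade-off: your route is more self-contained and does not hinge on a cleverly chosen embedding, but it pays for this with substantially more technical overhead, all of which the paper's embedding trick makes unnecessary.
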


We say that $\psi_X$ is \emph{integral} if $\psi_X(\Z^\rho)\subset N^1(X)$.

\begin{remark}
In \cite{LM08},  Okounkov bodies were defined in a more general setup.
The subvarieties $Y_i$ were not assumed to be smooth, but merely irreducible,
and smooth at the point $Y_n$. The statement of Theorem A can easily be generalised 
to flags of this form. For this, suppose that Theorem A
holds under the hypothesis that each element of $Y_\bullet$ is smooth.

Consider now a smooth variety $X$ with a flag $Y_{\bullet}$ of irreducible 
varieties, smooth at the point $Y_n$. Choose a proper birational map 
$\mu :X'\rightarrow X$, which is an isomorphism in some neighbourhood of $Y_n$, such that 
the proper transform $Y_i'$ of each $Y_i$ is smooth and irreducible. The flag $Y_{\bullet}'$ is then admissible in our sense and hence for any line bundle $\mathcal{L}$ on $X$ 
there is an $i\in \N$ such that $\Delta_{Y'_{\bullet}}(X';\mu^*\mathcal{L})=\Delta_i$. By Zariski's Main Theorem 
$\mu_*(\mathcal{O}_{X'})=\mathcal{O}_X$ and hence
\[
H^0(X,\mathcal{L}^{\otimes m}) \ = \ H^0(X', 
\mu^{*}(\mathcal{L}^{\otimes m}))
\]
for any $m\in \N$.  Since $\mu$ is an isomorphism in a neighborhood of $Y_n$, it follows
that $\Delta_{Y_{\bullet}}(X;\mathcal{L})= \Delta_{Y_{\bullet}'}(X';\mu^*(\mathcal{L}))=\Delta_i$.
\end{remark}

We now give some definitions and technical prerequisites needed in the proof of 
Theorem~\ref{okounkovbodies}. We set
\[
W \ \deq \ \underbrace{\P^{2n+1} \times \ldots \times \P^{2n+1}}_{\rho \text{ times }}\ .
\]
Note that every line bundle on $W$ has the form
\[
\mathcal{O}_W(\underline{m}) \deq  p_{1}^{*}(\mathcal{O}_{\P^{2n+1}}(m_1))\otimes\ldots \otimes p_{\rho}^{*}(\mathcal{O}_{\P^{2n+1}}(m_{\rho})) 
\] 
for some $\underline{m} \deq (m_1,\ldots ,m_{\rho})\in \Z^{\rho}$, where $p_i : 
W \rightarrow \P^{2n+1}$ is the projection onto the $i$-th factor. For a
projective subscheme $X\subseteq W$ we define its multigraded Hilbert function 
by
\[
P_{X}(\underline{m}) \deq  \chi(X,(\mathcal{O}_W(\underline{m}))|_X), 
\textrm{ for all } \underline{m}\in \Z^{\rho}.
\]
For any projective smooth subvariety $X\subseteq W$ we denote by $\psi_X$ the map
\[
\psi_X: \Z^\rho\rightarrow N^1(X), 
\]
where $\psi_X(\underline{m})= [(\mathcal{O}_W(\underline{m}))|_X]$. 
We also denote the induced map $\psi_X: \R^{\rho}\rightarrow N^1(X)_\R$ by $\psi_X$.  
\begin{proposition}\label{flagfamilies}
Suppose given an $(n+1)$-tuple of numerical functions
$\mathbf{P}=(P_0,\ldots, P_n)$, where $P_i: \mathbb{Z}^\rho\rightarrow \Z$
for all $i$. There exists a quasi-projective scheme $H_{\mathbf{P}}$, 
a closed subscheme $\mathcal{X}_{\mathbf{P}}\subset W\times H_{\mathbf{P}}$ and
a flag of closed subschemes $\mathcal{Y}_{\bullet, \mathbf{P}}: \mathcal{X}_{\mathbf{P}}=\mathcal{Y}_{0}
\supset\mathcal{Y}_{1}\supset\ldots\supset\mathcal{Y}_{n}$ such that 
\begin{enumerate}
\item the induced projection map 
$\phi_i: \mathcal{Y}_{i}\rightarrow H_{\mathbf{P}}$ is flat and 
surjective for all $i$,
\item for all $i$ and all $t\in H_{\mathbf{P}}$ we have that $P_{\mathcal{Y}_{i,t}}=
P_i$,
\item for any projective subvariety $X\subseteq W$ of dimension $n$ and any complete flag of subvarieties 
$X=Y_0\supset Y_1\supset Y_2\supset\ldots \supset Y_n$ such that $P_{Y_i}=P_i$ 
there exists a closed point $t\in H_{\mathbf{P}}$ and an isomorphism $\beta: \mathcal{X}_t
\rightarrow X$ with the property that $\beta (\mathcal{Y}_{i,t})= Y_i$ for all $i$.
\end{enumerate}
\end{proposition}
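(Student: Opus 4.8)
The plan is to build $H_{\mathbf{P}}$ as a locally closed subscheme of a product of multigraded Hilbert schemes, and to extract the universal families $\mathcal{Y}_{i,\mathbf{P}}$ from the universal families over those Hilbert schemes. Concretely, for each $i$ let $\mathrm{Hilb}^{P_i}(W)$ denote the multigraded Hilbert scheme parametrising closed subschemes of $W=(\P^{2n+1})^\rho$ with multigraded Hilbert polynomial $P_i$; this exists and is projective by the theorem of Haiman--Sturmfels. Over it sits a universal closed subscheme $\mathcal{Z}_i\subseteq W\times\mathrm{Hilb}^{P_i}(W)$, flat over $\mathrm{Hilb}^{P_i}(W)$ with every fibre having multigraded Hilbert function $P_i$ (for the last point one uses that on a $\rho$-fold product of projective spaces the multigraded Hilbert polynomial determines, and is determined by, the Hilbert function in all sufficiently positive multidegrees, and flatness together with connectedness of the parameter space propagates this). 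Inside the product $\prod_{i=0}^n \mathrm{Hilb}^{P_i}(W)$ we then take the locus $H_{\mathbf{P}}$ of tuples $(Z_0,\ldots,Z_n)$ such that $Z_0\supseteq Z_1\supseteq\cdots\supseteq Z_n$; the containment condition is closed (it is the condition that the ideal sheaf of $Z_{i-1}$ be contained in that of $Z_i$, which cuts out a closed subscheme of the product — this is a standard incidence-variety argument using the universal families), so $H_{\mathbf{P}}$ is a projective scheme, and we regard it as quasi-projective as required.

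Having fixed $H_{\mathbf{P}}$, I would define $\mathcal{Y}_{i,\mathbf{P}}$ to be the restriction of the universal family $\mathcal{Z}_i$ along the projection $H_{\mathbf{P}}\to\mathrm{Hilb}^{P_i}(W)$, viewed as a closed subscheme of $W\times H_{\mathbf{P}}$, and set $\mathcal{X}_{\mathbf{P}}=\mathcal{Y}_{0,\mathbf{P}}$. Property (1), flatness and surjectivity of $\phi_i\colon\mathcal{Y}_{i,\mathbf{P}}\to H_{\mathbf{P}}$, is then immediate: flatness is pulled back from flatness of $\mathcal{Z}_i$ over $\mathrm{Hilb}^{P_i}(W)$ along a base change, and surjectivity holds because every point of $H_{\mathbf{P}}$ lies under a nonempty fibre. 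Property (2) is built into the construction of the Hilbert scheme, as discussed above: for every $t\in H_{\mathbf{P}}$ the fibre $\mathcal{Y}_{i,t}$ is a subscheme of $W$ whose multigraded Hilbert function equals $P_i$. For property (3), given $X\subseteq W$ of dimension $n$ with a complete flag $X=Y_0\supset\cdots\supset Y_n$ satisfying $P_{Y_i}=P_i$, each $Y_i$ is a closed subscheme of $W$ with the prescribed multigraded Hilbert function, hence corresponds to a point $[Y_i]\in\mathrm{Hilb}^{P_i}(W)$; the inclusions $Y_0\supseteq\cdots\supseteq Y_n$ mean the tuple $([Y_0],\ldots,[Y_n])$ lies in $H_{\mathbf{P}}$, and over that point $t$ the fibres of the universal families are canonically $\mathcal{Y}_{i,t}=Y_i$, so $\beta$ may be taken to be this canonical identification $\mathcal{X}_t=Y_0=X$.

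The main obstacle is reconciling the bookkeeping between \emph{Hilbert polynomials} (which index the Haiman--Sturmfels multigraded Hilbert schemes) and the \emph{Hilbert functions} $P_{\mathcal{Y}_{i,t}}$ appearing in properties (2) and (3): these agree only in sufficiently positive multidegrees, so a little care is needed to phrase things so that "$P_{Y_i}=P_i$" in the hypothesis of (3) is consistent with "$P_{\mathcal{Y}_{i,t}}=P_i$" in (2). The cleanest fix is to note that we only ever feed this proposition with tuples $\mathbf{P}$ arising as $\mathbf{P}=(P_{Y_0},\ldots,P_{Y_n})$ for an actual flag, and that for flat families over a connected base the Hilbert function in each fixed multidegree is locally constant, hence constant; so one may harmlessly replace the finitely many relevant components of $H_{\mathbf{P}}$ (those meeting the desired flag) and work componentwise, or equivalently stratify $\mathrm{Hilb}^{P_i}(W)$ by Hilbert function and restrict to the appropriate stratum — this is why the statement only claims $H_{\mathbf{P}}$ is quasi-projective rather than projective. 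A secondary, purely technical point is checking that the containment locus is genuinely closed and not merely constructible; this follows because containment of subschemes is detected by vanishing of the composite $\mathcal{I}_{Z_{i-1}}\to\mathcal{O}_W\to\mathcal{O}_{Z_i}$ of sheaves flat over the base, a condition which is closed by the standard semicontinuity/vanishing-locus argument.
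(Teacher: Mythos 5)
Your proposal is correct and follows essentially the same route as the paper: Haiman--Sturmfels multigraded Hilbert schemes for each $P_i$, the incidence (containment) locus inside their product, and pullback of the universal families along the projections, with conditions (1)--(3) following from flatness and the universal property. The only divergence is bookkeeping: the paper defines $P_X(\underline{m})=\chi\big(X,(\mathcal{O}_W(\underline{m}))|_X\big)$ and invokes the Haiman--Sturmfels scheme directly for this numerical function (which is constant in flat families), so the polynomial-versus-function reconciliation you flag as the main obstacle does not arise there --- and note these Hilbert schemes are in general only quasi-projective rather than projective, which is all the statement requires.
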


\begin{proof}
For each $i$, \cite[Corollary 1.2]{HS04} says that there exists a multigraded Hilbert scheme $H_{P_i}$. This is equipped with a flat surjective family $\mathcal{Y}'_i\subset W\times H_{P_i}$ that has the property that for any $Y'_i\subset W$ with $P_{Y'_i}= P_i$ 
there is a $t$ such that $\mathcal{Y}'_{i,t}= Y'_i$.

We consider $H_{P_i}$ and $\mathcal{Y}'_i$  with their reduced scheme 
structure. We now define
\[ 
H_{\mathbf{P}}\subset H_{P_0}\times H_{P_1}\times \ldots \times H_{P_n}
\]
to be given by the incidence relation:  $t=(h_0,\ldots, h_n)\in
H_{\mathbf{P}}$ if and only if $\mathcal{Y}'_{i,h_i}\subset
\mathcal{Y}'_{i-1,h_{i-1}}$ for all $i$. Each element $\mathcal{Y}_i$ of the flag 
$\mathcal{Y}_{\bullet ,\mathbf{P}}$ is defined to be $\mathcal{Y}_i= \pi_i^*(\mathcal{Y}'_i)$, where 
$\pi_i: H_{\mathbf{P}}\rightarrow H_{P_i}$ 
is the projection onto the factor $H_{P_i}$. By definition, 
$\mathcal{Y}_i\subset \mathcal{Y}_{i-1}$ for all $i$ and 
$\mathcal{Y}_i\rightarrow H_{\mathbf{P}}$ is surjective and 
flat because $\mathcal{Y}_i'$ is: condition 1) therefore holds. Condition 2)
 is immediate. By the universal property of the  multigraded Hilbert schemes
$H_{P_i}$, Condition 3) is also satisfied. This completes the proof of
Proposition \ref{flagfamilies}
\end{proof}

\begin{proof}[Proof of Theorem \ref{okounkovbodies}] Let 
$X$ be a smooth irreducible variety of dimension $n$ and Picard number 
$\rho$, equipped with a flag $Y_\bullet$. 
We start by showing that $X$ can then be embedded in $W$ in such a way 
that the induced map of real vector spaces 
$\psi_X$ is an integral isomorphism. Choose $\rho$ very ample line 
bundles $\mathcal{L}_{1,X},\ldots ,
\mathcal{L}_{\rho ,X}$ on $X$ forming a $\Q$-basis of $N^1(X)_{\Q}$. 
As $X$ is smooth, \cite[Theorem 5.4.9]{Sh94} says that for every $i$ 
there is an 
embedding $\alpha_i:X\hookrightarrow \P^{2n+1}$ such that $\mathcal{L}_{i,X}\equ
\alpha_i^*(\mathcal{O}_{\P^{2n+1}}(1))$. We can then embed $X$ in $W$ via
\begin{equation}\label{embedding}
X \stackrel{\Delta}{\longrightarrow} 
\underbrace{X\times \ldots \times X}_{\rho \text{ times }} 
\stackrel{\alpha_1\times\ldots\times \alpha_\rho}{\longrightarrow} W
\end{equation}
where $\Delta$ is the diagonal morphism. Note that 
$\psi_X : \R^\rho \rightarrow  N^1(X)_{\R}$ is an integral 
linear isomorphism by construction. 

Let us now consider the admissible flag $Y_\bullet$ on 
$X$; the multigraded Hilbert functions $P_{Y_i}$ are polynomials with 
rational coefficients. There are therefore only countably many $(n+1)$-tuples of 
numerical functions $\mathbf{P}$ which appear as the multigraded Hilbert function of a 
smooth $n$-dimensional subvariety of $W$, equipped with an admissible flag. By Proposition~\ref{flagfamilies}, there exist countably many 
quasi-projective schemes $T_j$ and closed subschemes $\mathcal{X}_j
\subset W\times T_j$, each equipped with a flag $\mathcal{Y}_{\bullet ,j}$. These families of flags 
have the property that for any smooth irreducible variety $X$ of 
dimension $n$ and Picard number $\rho$ and any 
admissible flag $Y_{\bullet}$ on $X$ there is a closed point $t\in T_j$ for some 
$j$ such that the variety-flag pair  $(\mathcal{X}_{j,t}, \{\mathcal{Y}_{\bullet ,j,t}\})$ is 
isomorphic to the variety-flag pair $(X,\{Y_{\bullet}\})$ and the map 
$\psi_X: \R^\rho\rightarrow N^1(X)_\R$ is an integral isomorphism. We may
without loss of generality consider the schemes $T_j$,
$\mathcal{X}_j$ and $\mathcal{Y}_{i,j}$ with their reduced structure.

Through the rest of the proof of Theorem~\ref{okounkovbodies}, $T$ will be a 
reduced and irreducible  quasi-projective scheme and $\mathcal{X}\subset W\times T$ will be a closed 
subscheme such that the induced projection map
$\phi :  \mathcal{X} \rightarrow  T$ is surjective, flat and 
projective. We suppose given a flag of closed subschemes of $\mathcal{X}$
\[
\mathcal{Y}_{\bullet} \ : \ \ \mathcal{X}= \mathcal{Y}_0 \ \supseteq \ \mathcal{Y}_1 \ \supseteq \ \ldots \ \supseteq \ \mathcal{Y}_{n-1} \ \supseteq \ \mathcal{Y}_n
\]
such that the restriction maps $\phi_i\deq\phi|_{\mathcal{Y}_i} \ : \ 
\mathcal{Y}_i\rightarrow \ T$ are flat, projective and surjective. 
We say that $t\in T$ has an 
\emph{admissible fibre} if the fibre $X_t$ is smooth and irreducible and 
the flag $Y_{t,\bullet}$ is admissible. We will say that $t\in T$ is 
\emph{fully admissible} if it has an admissible fibre and the induced map 
$\psi_{X_t}: \R^\rho\rightarrow N^1(X_t)_\R$ is an isomorphism. With this notation in hand we prove the following proposition.
\begin{proposition}\label{OBrestricted}
Given $T$, $\mathcal{X}$ and $\mathcal{Y}_i$ as above, there exists a 
countable set of convex cones $(\Delta_i)_{i\in\N} \subset\mathbb{R}^n\times 
\mathbb{R}^\rho$ such that for any fully admissible $t\in T$ the cone
$({\rm id}_{\R^n}\times \psi_{X_t}^{-1})(\Delta_{Y_{\bullet,t}}(X_t))$ is equal to $\Delta_i$ for 
some $i$.  
\end{proposition}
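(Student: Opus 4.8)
The plan is to show that, for a fully admissible $t$, the cone $C_t:=(\textup{id}_{\R^n}\times\psi_{X_t}^{-1})(\Delta_{Y_{\bullet,t}}(X_t))$ is a fixed function of a countable tuple of numbers attached to the family $\mathcal Y_\bullet\to T$, each of which is a constructible function of $t$; a Noetherian induction then shows that such a tuple assumes only countably many values, whence so does $C_t$.

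\emph{Reconstructing the cone from cohomology along the flag.} Write $L_{\underline m}$ for the line bundle on $\mathcal X$ induced by $\mathcal O_W(\underline m)$, so that $c_1(L_{\underline m,t})=\psi_{X_t}(\underline m)$. For fully admissible $t$ the map $\psi_{X_t}$ is an isomorphism, and by Theorem~B of \cite{LM08} together with the numerical invariance of Okounkov bodies the cone $C_t$ is the closed convex cone generated by the bodies $\Delta_{Y_{\bullet,t}}(X_t;\psi_{X_t}(\underline m))\times\{\underline m\}$ as $\underline m$ runs over those elements of $\Z^\rho$ with $\psi_{X_t}(\underline m)$ big (every element of $\Gamma_{Y_{\bullet,t}}(X_t)$ lies in the closure of these big slices, so they do generate the whole closed cone). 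Now $\Delta_{Y_{\bullet,t}}(X_t;\psi_{X_t}(\underline m))$ is the closed convex hull of $\bigcup_{k\ge1}\tfrac{1}{k}\,\Gamma_{Y_{\bullet,t}}(X_t;L_{\underline m,t})_k$, and, by the flag description of Okounkov bodies in \cite[\S1]{LM08}, the finite set $\Gamma_{Y_{\bullet,t}}(X_t;L_{\underline m,t})_k\subseteq\N^n$ — the set of $\nu$-vectors of sections of $L_{\underline m,t}^{\otimes k}$ — is determined by the integers $h^0\big(X_t,(L_{\underline m}^{\otimes k}\otimes\mathcal K_{\underline a})|_{X_t}\big)$, $\underline a\in\N^n$, for suitable coherent ideal sheaves $\mathcal K_{\underline a}\subseteq\mathcal O_{\mathcal X}$ built from $\mathcal Y_\bullet$ alone. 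After replacing $T$ by the pieces of a common flattening stratification of the sheaves $\mathcal O_{\mathcal X}/\mathcal K_{\underline a}$ — a countable refinement into locally closed subschemes which we again take reduced and irreducible and treat one at a time — the formation of $L_{\underline m}^{\otimes k}\otimes\mathcal K_{\underline a}$ commutes with passage to fibres, so $C_t$ is a fixed function of the countable tuple $\big(h^0(X_t,(L_{\underline m}^{\otimes k}\otimes\mathcal K_{\underline a})|_{X_t})\big)_{k\ge1,\,\underline m\in\Z^\rho,\,\underline a\in\N^n}$.

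\emph{Constructibility, and reduction to a topological statement.} For any coherent sheaf $\mathcal F$ on $\mathcal X$ the function $t\mapsto h^0(X_t,\mathcal F_t)$ is constructible on $T$: by generic flatness $\mathcal F$ is flat over a dense open of $T$, on which the semicontinuity theorem for cohomology in flat projective families makes it constructible, and one finishes by Noetherian induction on the complement. Hence every entry of the tuple above is a constructible $\N$-valued function of $t$, and the proposition reduces to the elementary statement that a countable family $(f_j)_{j\in\N}$ of constructible $\N$-valued functions on a Noetherian scheme $T$ realises only countably many distinct tuples $(f_j(t))_{j\in\N}$ as $t$ ranges over $T$. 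Granting this, the tuple — hence $C_t$ — takes only countably many values over the fully admissible points of $T$, and enumerating these yields the cones $(\Delta_i)_{i\in\N}$ of the proposition.

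\emph{Proof of the topological statement, and the main obstacle.} We argue by Noetherian induction on closed subsets of $T$, reducing to $T$ irreducible with generic point $\eta$. Each $f_j$, being constructible, equals its generic value $f_j(\eta)$ on a dense open $U_j\subseteq T$; on $\bigcap_j U_j$ the whole tuple is constant, while $T\setminus\bigcap_j U_j=\bigcup_j(T\setminus U_j)$ is a \emph{countable} union of proper closed subsets, to each of which the inductive hypothesis applies, contributing only countably many further tuples. This last step is the heart of the matter and its only genuine subtlety: although each $f_j$ is individually constructible, there are infinitely many of them, and a countable intersection of dense open sets need not be open, so one cannot stratify $T$ into finitely — or even, in any obvious way, countably — many pieces on which the entire tuple is constant; routing the induction through the countable union of proper closed subsets $\bigcup_j(T\setminus U_j)$ is exactly what circumvents this. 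The remaining input — the flag description and numerical invariance of Okounkov bodies, the reduction to the bundles $\mathcal O_W(\underline m)$, and the flattening stratifications — is routine once Theorem~B of \cite{LM08} is available.
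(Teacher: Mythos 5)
Your overall skeleton --- reduce to the line bundles $\mathcal{O}_W(\underline m)$, show the cone at a fully admissible $t$ is determined by per-$\underline m$ valuation data, and handle the base by a Noetherian induction in which the bad locus is a countable union of proper closed subsets --- is the same as the paper's. But the step you dismiss as routine is exactly where the paper needs its key input, and as written it has a genuine gap. The paper first restricts to the open locus where all flag fibres are smooth and irreducible (so that each $\mathcal{Y}_{i+1}\subset \mathcal{Y}_i$ is relatively Cartier) and then invokes \cite[Theorem 5.1]{LM08}, which states precisely that in such a family, for each $\underline m$, the images $\textup{Im}\big(\nu_{Y_{\bullet,t}}\colon H^0(X_t,(\mathcal{O}_W(\underline m))|_{X_t})\to \Z^n\big)$ coincide for all $t$ outside a proper closed subset $F_{\underline m}$. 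You replace this by the assertion that there exist coherent subsheaves $\mathcal{K}_{\underline a}\subset\mathcal{O}_{\mathcal X}$, built from $\mathcal{Y}_\bullet$, with $H^0\big(X_t,(L_{\underline m}^{\otimes k}\otimes\mathcal{K}_{\underline a})|_{X_t}\big)=\{s \mid \nu_{Y_{\bullet,t}}(s)\geq_{\mathrm{lex}}\underline a\}$. Fibrewise such subsheaves do exist (an iterated kernel along the flag), but producing them relatively over $T$ and showing their formation commutes with passage to fibres is the actual content: it requires the relative Cartier property of the $\mathcal{Y}_i$ (hence the restriction to the admissible locus, which you never perform) and base-change statements at every level of the iterated construction. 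None of this is argued, and citing \cite[\S 1]{LM08} does not cover it --- that section is purely fibrewise.

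Moreover, the device you use to get base change everywhere --- ``a common flattening stratification of the sheaves $\mathcal{O}_{\mathcal X}/\mathcal{K}_{\underline a}$, a countable refinement into locally closed subschemes'' --- does not exist in general: there are countably many such sheaves, each with a finite flattening stratification, and a common refinement of countably many stratifications need not have locally closed pieces (already for countably many points $p_i\in\A^1$ and the stratifications $\{\{p_i\},\A^1\setminus\{p_i\}\}$, the piece $\A^1\setminus\{p_1,p_2,\ldots\}$ is not locally closed, hence not a subscheme to ``treat one at a time''). So the claim that on each piece $C_t$ is a fixed function of the $h^0$-tuple is unjustified as stated. The repair is to fold the base-change loci into the very same induction you use at the end: on irreducible $T$, take for each $\underline a$ (and each level of the construction) the dense open where the relevant quotients are flat, note that at points of the countable intersection of these opens both the sheaf-theoretic identification and the generic values of your constructible functions hold simultaneously, and push the countably many proper closed complements into the Noetherian induction --- which is structurally the paper's proof, with your (still to be completed) relative-sheaf argument playing the role of \cite[Theorem 5.1]{LM08}. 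Your final countability lemma for tuples of constructible functions is correct and mirrors the paper's induction on $\dim T$; the gap is concentrated in the middle reduction.
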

\begin{proof}[Proof of Proposition \ref{OBrestricted}]
We consider the $\mathcal{Y}_i$'s with their reduced structure. Since 
the proposition is immediate if there is no fully admissible 
$t$ we may assume at least one such $t$ exists.
 
By induction on the dimension of $T$, it will be enough to prove the existence 
of a non-trivial open subset $U\subseteq T$ such that the conclusions of 
Proposition~\ref{OBrestricted} hold for any fully admissible $t\in U$.  
We can therefore assume $T$ is smooth and there exists a fully admissible 
closed point $t_0\in T$. Since each $\phi_i$ is flat, by \cite[Theorem 12.2.4]{Gr} the set of points 
$t\in T$ such that $Y_{i,t}$ is smooth and irreducible for every $i$ is open 
in $T$. We can therefore assume that all 
$t\in T$ have an admissible fibre.

Since  $X_t$ is smooth for all $t\in T$, the map $\phi$ is smooth by 
\cite[Theorem III.10.2]{Ha77}. Therefore we can  apply 
\cite[Proposition 2.5]{KLM09} or Ehresmann's theorem to deduce that the map
$\psi_{X_t}$ is injective for all $t\in T$.

With this in mind, it will be enough to show that under the above hypotheses, the set 
\[ 
\{ ({\rm id}_{\R^n}\times \psi_{X_t}^{-1}) (\Delta_{Y_{\bullet, t}}(X_t)) \ | \ t\in T \ \}
\] 
is countable. We note further that for any two points $t_1, t_2\in T$ we have that
\[
({\rm id}_{\R^n}\times \psi_{X_{t_1}}^{-1}) (\Delta_{Y_{\bullet, t_1}}(X_{t_1})) \equ 
({\rm id}_{\R^n}\times \psi_{X_{t_2}}^{-1}) (\Delta_{Y_{\bullet, t_2}}(X_{t_2}))
\] 
if  and only if $\Delta_{Y_{\bullet, t_1}}(X_{t_1}; (\mathcal{O}_W(\underline{m}))|_{X_{t_1}})$ is equal to
$\Delta_{Y_{\bullet, t_2}}(X_{t_2}; (\mathcal{O}_W(\underline{m}))|_{X_{t_2}})$ for every 
$\underline{m}\in \Z^\rho$. This will be the case whenever it happens that 
 \begin{equation}\label{E1}
\textup{Im}(\nu_{Y_{t_1,\bullet}}: \ H^0(X_{t_1},
(\mathcal{O}_{W}(\underline{m}))|_{X_{t_1}}) \ 
\rightarrow \ \mathbb{Z}^n)  \equ
\textup{Im}(\nu_{Y_{t_2,\bullet}}\ H^0(X_{t_2},(\mathcal{O}_{W}(\underline{m}))|_{X_{t_2}}) \ \rightarrow \ \mathbb{Z}^n)
\end{equation}
for any $\underline{m}\in \mathbb{Z}^\rho$. Thus it suffices to show that there exists a 
subset $F =\cup F_{\underline{m}} \subseteq T$ consisting of a countable union of proper 
Zariski-closed subsets $F_m \varsubsetneqq T$ such that \eqref{E1} holds 
for every $\underline{m}\in \Z^\rho$ whenever $t_1, t_2\in T\setminus F$. By induction on dim$(T)$, 
this implies Proposition~\ref{OBrestricted}. 

We recall that each morphism $\phi_i:\mathcal{Y}_i
\rightarrow T$ has smooth irreducible fibers, so by  \cite[Theorem III.10.2]{Ha77} $\phi_i$ is smooth. Since $T$ is smooth each $\mathcal{Y}_i$ is smooth and 
hence 
$\mathcal{Y}_{i+1}\subseteq \mathcal{Y}_i$ is Cartier. Thus our family of flags satisfies the conditions of \cite[Theorem 5.1]{LM08}, and consequently,   for any $\underline{m}$ there exists a proper closed subset  $F_{\underline{m}}\subseteq T$ such that the sets 
\begin{equation}\label{open}
 \ \textup{Im}(\nu_{Y_{t,\bullet}}: \ H^0(X_t,(\mathcal{O}_W(\underline{m}))|_{X_t})) 
\ \rightarrow \ \mathbb{Z}^n)
\end{equation}
coincide for all $t\not\in F_{\underline{m}}$. Thus upon setting
$F=\cup F_{\underline{m}}$, $F$ has the properties we seek and 
\[
({\rm id}_{\R^n}\times \psi_{X_{t}}^{-1})(\Delta_{\mathcal{Y}_{t,\bullet}}(X_t)) \ \subseteq \ \R^n\times \R^\rho 
\]
is independent of $t\in T\setminus F$. When $t\in T$ is fully admissible $\psi_{X_t}$ is an isomorphism and this 
completes the proof of Proposition \ref{OBrestricted}.
\end{proof}
By above, any smooth variety of dimension $n$ and Picard number $\rho$ and any admissible flag on it is a fiber in one of the countably many variety-flag pairs $(\mathcal{X}_{j,t}, \{\mathcal{Y}_{\bullet ,j,t}\})$. Thus, by Proposition~\ref{OBrestricted}, we deduce the countability of global Okounkov cones.
\end{proof}

\section{Conditions on Okounkov bodies on surfaces.} 

We turn our attention to  Theorem B which characterises the convex
 bodies arising as Okounkov bodies of big $\R$-divisors on smooth surfaces. 
Whilst we do not characterise them completely, we also establish fairly strong 
conditions on the set of convex bodies which are Okounkov bodies of 
$\Q$-divisors. Our main technical tool will be  Zariski decomposition of divisors. 

Throughout the rest of this section, $S$ will be a smooth surface equipped 
with an admissible flag $(C,x)$, consisting of a smooth curve $C\subseteq S$ and a point $x\in C$, and $D$ will be a pseudo-effective real (or 
rational) divisor on $S$.

Any pseudo-effective divisor $D$ has a  \textit{Zariski decomposition}, 
(the effective case was treated in \cite{Z62} and the pseudo-effective one in \cite{Fu82}; see also \cite[Theorem 14.14]{LB01} for an account of the
proof of this fact). By a Zariski decomposition of  $D$ we mean that $D$ can be 
uniquely written as a sum
\[
D \ = \  P(D) \ + \ N(D)
\]
of $\R$-divisors (or $\Q$-divisors whenever $D$ is such) with the property that $P(D)$ is nef, $N(D)$ is
either zero or  effective with negative definite intersection matrix, and $(P(D).E)=0$ for 
every irreducible component $E$ of $N(D)$. $P(D)$ is called the \textit{positive part} of $D$ and $N(D)$ the \textit{negative part}. 
Another important property of the Zariski decomposition is the minimality of the 
negative part (first proved in \cite{Z62}, c.f. 
\cite[Lemma 14.10]{LB01}). This states that if $D=M+N$, where $M$ is nef and 
$N$ effective, then $N-N(D)$ is effective. 

We prove Theorem B using Lazarsfeld and Musta\c t\u a's description of the 
Okounkov body of a divisor on a surface (\cite[Theorem 6.4]{LM08}) via 
Zariski decomposition. Let $\nu$ be the coefficient of $C$ in the negative 
part $N(D)$ and set
\[ 
\mu \ = \ \mu (D;C) \ = \ \textup{sup}\{ \ t>0 \ | \ D-tC \textup{ is big }\} . 
\]
When there is no risk of confusion we will denote $\mu(D;C)$ by $\mu(D)$. 
For any $t\in [\nu ,\mu ]$ we set $D_t=D-tC$ and write $D_t=P_t+N_t$ for its 
Zariski decomposition. There then exist two continuous functions $\alpha ,
\beta :[\nu ,\mu ]\rightarrow \R_+$ defined as follows
\[ 
\alpha(t)= {\rm ord}_x(N_t|_C), \mbox{ }\mbox{ }\beta(t)={\rm ord}_x(N_t|_C)+
P_t\cdot C
\] 
such that the Okounkov body $\Delta_{(C,x)} (S;D)\subseteq \R^2$ is the region 
bounded 
by the graph of $\alpha$ and $\beta$: 
\[ 
\Delta_{(C,x)} (S;D)=\{ (t,y)\in \mathbb{R}^2 \ | \ \nu\leq t\leq \mu, \alpha(t)\leq 
y\leq \beta(t)\}\ . 
\]
We now set $D'\equ  D-\mu C$: the divisor $D'$ is pseudo-effective by 
definition of $\mu$. For any $t\in[\nu, \mu ]$ we write $s=\mu -t$ and set
\[ 
D'_s\deq D'+sC =D'+(\mu-t)C =D-tC.
\]
It turns out to be more useful to consider the line segment $\{D_t \ | \ t\in [\nu, \mu]\}$ in the form $\{D'_s \ | \ s\in [0, \mu-\nu]\}$. Let $D'_s=
P'_s+N'_s$ be the Zariski decomposition of $D'_s$: the following proposition 
examines the variation $N'_s$ as a function of  $s\in [0, \mu -\nu ]$.

\begin{proposition}\label{prop1} 
The function $s\mapsto N'_s$ is decreasing on the interval 
$[0, \mu-\nu]$, i.e. for each $0\leq s'< s\leq \mu-\nu$ the divisor 
$N'_{s'}-N'_s$ is effective. If $n$ is the number of irreducible components of $N'_0$, then there is a partition $(p_i)_{0\leq i\leq k}$ of the interval $[0, \mu-\nu]$, for some $k\leq n$, and there exist divisors $A_i$ and $B_i$ with $B_i$ rational 
such that $N'_s=A_i+sB_i$ for all $s\in [p_i,p_{i+1}]$.
\end{proposition}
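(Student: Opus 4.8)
The plan is to analyze how the Zariski decomposition $D'_s = P'_s + N'_s$ varies as $s$ increases from $0$ to $\mu-\nu$, exploiting the minimality of the negative part. First I would establish monotonicity: fix $0 \le s' < s \le \mu-\nu$ and write $s = s' + \delta$ with $\delta > 0$. Then $D'_s = D'_{s'} + \delta C = (P'_{s'} + \delta C) + N'_{s'}$, and since $P'_{s'} + \delta C$ is nef (a nef class plus an effective class need not be nef — so instead I would argue via the minimality characterization: $P'_s$ is nef and $D'_{s'} = P'_s - \delta C + N'_s$; one must compare the two decompositions more carefully). The cleaner route is: $D'_{s'} = D'_s - \delta C$, and $P'_s - \delta C$ need not be nef either, so the correct argument is that $N'_s \le N'_{s'}$ because adding $sC$ to $D'_0$ can only \emph{shrink} the negative locus — concretely, if $E$ is a component of $N'_s$ then $(P'_s \cdot E) = 0$, and one shows the coefficient of $E$ in $N'_{s'}$ is at least that in $N'_s$ by applying minimality of $N(D'_{s'})$ to the (possibly non-nef, hence not directly usable) candidate; the honest argument uses that $D'_s = (P'_{s'} + \delta C + (N'_{s'} - N'_s)) + N'_s$ and that the first summand is nef — which requires knowing $N'_{s'} - N'_s \ge 0$, so this is circular. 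I would therefore instead prove monotonicity by the standard fact that $t \mapsto N(D-tC)$ is increasing in $t$ on the big cone (equivalently decreasing in $s$), which follows from \cite{LM08} / the theory of Zariski chambers: on each Zariski chamber the negative part has a fixed support, and crossing a wall only enlarges it.

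Second, I would invoke the Zariski chamber decomposition of the big cone of $S$: the big cone is partitioned into finitely many locally polyhedral subcones (chambers) on each of which the support $\mathrm{Supp}(N(\cdot))$ is constant, and on which $P(\cdot)$ and $N(\cdot)$ are \emph{linear} in the divisor class. The line segment $\{D'_s : s \in [0,\mu-\nu]\}$ (extended slightly, staying pseudo-effective) meets only finitely many chambers. Because the segment is a line and the chambers are convex, the set of $s$ lying in any one chamber is a subinterval; hence there is a partition $0 = p_0 < p_1 < \cdots < p_k = \mu-\nu$ such that on each $[p_i, p_{i+1}]$ the class $D'_s$ stays in one chamber $\mathcal{C}_i$ (closures overlapping at the $p_i$). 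On $\mathcal{C}_i$ the map "class $\mapsto N(\cdot)$" is given by a fixed linear projection, so $N'_s = N(D' + sC)$ is an affine-linear function of $s$: writing it as $N'_s = A_i + s B_i$ gives divisors $A_i, B_i$ with $B_i$ the image of the class of $C$ under that linear projection, hence rational since the chamber walls and the projection are defined over $\Q$.

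Third, I would bound the number of pieces $k$ by $n = \#$ of irreducible components of $N'_0$. Here I would use monotonicity from the first step: as $s$ increases the divisor $N'_s$ only decreases, so $\mathrm{Supp}(N'_s)$ is a \emph{decreasing} family of subsets of $\mathrm{Supp}(N'_0)$. Each break-point $p_i$ is a point where either the support strictly drops or where a component's coefficient hits $0$ (equivalently the support drops) or where the linear formula changes — but a linear-formula change across a chamber wall forces a change in support (that is precisely what distinguishes adjacent Zariski chambers). A support that is strictly decreasing along the $n$-element set $\mathrm{Supp}(N'_0)$ can change at most $n$ times, giving $k \le n$.

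The main obstacle I anticipate is making the monotonicity step ($N'_{s'} - N'_s \ge 0$) genuinely rigorous without circularity: the naive "nef + effective = nef" manipulation fails, so one must either cite the increasing behavior of negative parts along rays in the big cone from the Zariski-chamber literature, or prove it directly using the minimality property — e.g. by showing that the divisor $M := P'_s - \delta C + \min(N'_{s'},\text{something})$ is nef via checking $M \cdot E \ge 0$ on all curves $E$, splitting into $E \subseteq \mathrm{Supp}(N'_s)$ and $E \not\subseteq \mathrm{Supp}(N'_s)$. Once monotonicity and the local linearity (chamber structure) are in hand, the partition and the bound $k \le n$ are essentially bookkeeping. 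A secondary point requiring care is rationality of $B_i$: it comes down to the fact that the intersection matrix of the negative part's support is integral, so the linear solve for the coefficients of $N(\cdot)$ has rational coefficients, whence the $s$-derivative $B_i$ is a rational divisor even when $A_i$ (and the endpoints $p_i$) may be irrational.
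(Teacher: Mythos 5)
The second and third steps of your plan (affine linearity of $N'_s$ on intervals where the support is constant, rationality of $B_i$ coming from the integral intersection matrix, and the bound $k\le n$ from the decreasing supports) are essentially the paper's argument and are fine \emph{granted} monotonicity. The genuine gap is the monotonicity step itself, which you acknowledge you cannot close and then replace by an appeal to a ``standard fact'' that is false as stated. The correct, non-circular argument is a direct application of minimality with a grouping different from the ones you tried: write
\[
D'_s \;=\; D'_{s'} + (s-s')C \;=\; P'_{s'} + \bigl((s-s')C + N'_{s'}\bigr),
\]
i.e.\ keep $P'_{s'}$ as the nef summand and absorb the extra $(s-s')C$ into the effective summand. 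Minimality of the negative part of $D'_s$ then gives $(s-s')C + N'_{s'} - N'_s \ge 0$ outright. This yields $N'_{s'}\ge N'_s$ on every component except possibly $C$ itself, and the remaining point --- that $C$ does not lie in $\Supp(N'_s)$ for $s\in[0,\mu-\nu]$ --- is exactly where the definition of $\nu$ must enter: if $C\subseteq\Supp(N'_s)$, then $D'_{s+\lambda}=P'_s+(N'_s+\lambda C)$ is already a Zariski decomposition for every $\lambda>0$, so $C$ would persist in $\Supp(N'_{\mu-\nu})=\Supp(N(D-\nu C))=\Supp(N(D)-\nu C)$, contradicting that $\nu$ is the coefficient of $C$ in $N(D)$. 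Your proposal never invokes the definition of $\nu$, and this omission is not cosmetic: the fact you want to cite, that $t\mapsto N(D-tC)$ is increasing in $t$, is false on $[0,\nu)$ whenever $\nu>0$, since there $N(D-tC)=N(D)-tC$ and the coefficient of $C$ strictly \emph{decreases} as $t$ increases; the statement only becomes true on $[\nu,\mu]$, and proving it there is precisely the argument above. Likewise your justification ``crossing a wall only enlarges the support'' is not a theorem from the Zariski-chamber literature (along a segment the support can both gain and lose components, as the behaviour of the $C$-coefficient shows), so it cannot substitute for the minimality argument.

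A smaller caveat: the appeal to local finiteness of the Zariski chamber decomposition is delicate here, because the endpoint $D'_0=D-\mu C$ lies on the boundary of the big cone, where chambers may accumulate. However, once monotonicity of the supports is in place, your own bookkeeping (at most $n$ support drops, and on each piece the linear system $N'_s\cdot C_j=(D'+sC)\cdot C_j$ on a fixed support with non-degenerate integral intersection matrix) already gives piecewise linearity with at most $n$ pieces and rational $B_i$ without any chamber-finiteness input; this is exactly how the paper argues, using continuity of the Zariski decomposition to pass from the open intervals $(p_i,p_{i+1})$ to their closures.
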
 
\begin{proof}
Let $C_1,\ldots, C_n$ be the irreducible components of $\Supp (N'_0)$. 
Choose real numbers $s',s$ such that $0\leq s'< s\leq \mu-\nu$. We 
can then write
\[ 
P'_{s'} \equ D'_{s'}-N'_{s'} \equ  (D'_s-(s-s')C)-N'_{s'} \equ D'_s - 
((s-s')C+N'_{s'}).
\] 
As $P'_{s'}$ is nef and the negative part of the Zariski decomposition is 
minimal, the divisor $(s-s')C+N'_{s'}-N'_s$ is effective and it remains only to show that $C$
is not in the support of $N'_s$ for any $s\in [0, \mu-\nu]$. If $C$ were in 
the support of $N'_s$ for some $s\in [0, \mu-\nu]$, then for any 
$\lambda >0$ the Zariski 
decomposition of $D'_{s+\lambda}$ would be $D'_{s+\lambda}=P'_s+(N'_s+\lambda C)$.
In particular, $C$ would be in the support of  $N'_{\mu-\nu}$, contradicting  
the definition of $\nu$. 

Rearranging the $C_i$'s, suppose that the support of $N'_{\mu-\nu}$ consists of 
$C_{k+1}, \ldots, C_n$. Let 
\[
p_i \deq  \sup \{ s\ | \ C_i\subseteq \Supp(N'_s)\} \textup{ for all } i=1\ldots k\ .
\]
Without loss of generality, suppose $0=p_0< p_1\leq \ldots \leq p_{k-1}\leq p_k\leq \mu 
-\nu$. We will show that $N'_s$ is linear on $[p_i, p_{i+1}]$ for this choice of
$p_i$'s. By the continuity of the Zariski decomposition (see \cite[Proposition 1.14]{BKS04}), it 
is enough to show that $N'_s$ is linear on the open interval $(p_i,p_{i+1})$. If
$s\in (p_i, p_{i+1})$ then the support of $N'_s$ is contained in 
$\{C_{i+1},\ldots ,C_{n}\}$, and $N'_s$ is determined uniquely by the equations
\[
N'_s\cdot C_j \equ (D'+sC)\cdot C_j, \textup{ for } i+1\leq j\leq n\ .
\] 
As the intersection matrix of the curves $C_{i+1},\ldots ,C_{n}$ is 
non-degenerate, there are unique divisors  $A_i$ and $B_i$ supported on 
$\cup_{j=i+1}^n C_j$ such that
\[
A_i\cdot C_j \equ D'\cdot C_j \textup{ and } B_i\cdot C_j= C\cdot C_j \textup{ for all }i+1\leq j\leq n\ .
\] 
Note that $B_i$ is a rational divisor and it follows that 
$N'_s= A_i+s B_i$ for any $s\in (p_i, p_{i+1})$.
\end{proof} 

\begin{proof}[Proof of Theorem B] 
Theorem 6.4 of \cite{LM08} implies that $\alpha$ is convex, $\beta$ is concave 
and $\alpha\leq\beta$. It follows from Proposition~\ref{prop1} that $\alpha$ and
$\beta$ are piecewise linear with only finitely many break-points. And finally 
$\alpha$ is an increasing function of $t$ by Proposition \ref{prop1}, because 
$N_t= N'_{\mu-s}$ and $\alpha (t)= {\rm ord}_x(N_t|C)$. This proves that any 
Okounkov body has the required form.

Conversely, we show that a polygon as in Theorem B is the Okounkov body of a 
real $T$-invariant divisor on some toric surface\footnote{We thank Sebastien 
Boucksom, who suggested using toric surfaces, to 
replace a more complicated example using iterated blow-ups of $\P^2$.}. This
section of the proof is based on Proposition 6.1 in \cite{LM08}
which characterises the Okounkov body of a $T$-invariant divisor with respect 
to a $T$-invariant flag in a toric variety in terms of the polygon
associated to $T$ in the character lattice $M_\Z$ associated to $S$.

Let $\Delta\subseteq \R^2$ be a polygon of the form given in Theorem B. As 
$\alpha$ is increasing we can assume after translation that 
$(0,0)\in \Delta \subseteq \R^2_+$. We identify $\R^2$  with the  
vector space $M_\R$ associated to a character lattice $M_{\Z}= \Z^2$. Let $E_1,
\ldots ,E_m$ be the edges of $\Delta$. Considering that $\alpha$ and $\beta$ have rational slopes, for each edge $E_i$ choose a 
primitive vector $v_i\in N_{\Z}$ normal to $E_i$ in the direction of the 
interior of $\Delta$, where $N_{\R}$ is the dual of $M_{\R}$. We can then write
\[
\Delta \ = \ \{\ u\in M_\R \ | \ \langle u, v_i\rangle+a_i \geq 0\mbox{ for 
all } i=1\ldots m \ \}
\]
for some positive real $a_i$'s. After adding additional vectors 
$v_{m+1},\ldots, v_r$ we can assume that the set $\{ v_1,\ldots, v_m\}$ has the
following properties.
\begin{enumerate}
\item The toric surface $S$ associated to the complete fan $\Sigma$ which is 
defined by the rays $\{\R_+\cdot v_1,\ldots ,\R_+\cdot v_r\}$ is smooth.
\item None of the $v_{i}$s lie in the interior of the first quadrant.
\item for some $i_1,i_2 \in \{ 1,\ldots ,m\}$ we have that $v_{i_1}= 
\left( \begin{array}{c} 1 \\ 0 \end{array} \right)$ and $v_{i_2}= 
\left( \begin{array}{c} 0 \\ 1 \end{array} \right)$
\end{enumerate}
Condition $(2)$ is possible because $\alpha$ is increasing. Since $\Delta$ is 
compact there exist real numbers $a_{m+1},\ldots ,a_r\in \Q_+$ such that
\[
\Delta \ = \ \{\ u\in M_\R \ | \ \langle u, v_i\rangle +a_i\geq 0\mbox{ for all 
} i=1\ldots r \ \}
\]
Condition $(3)$ implies that we can choose $a_{i_1}=a_{i_2}=0$. The general 
theory of toric surfaces now tells us that 
each $v_i$ represents a $T$-invariant divisor $D_i$ 
on $S$ and on setting $D=\Sigma a_iD_i$ the polytope $P(D)\subseteq M_{\R}$
 associated to $D$ is equal to $\Delta$. We choose on $S$ the flag consisting 
of the curve $C=D_{i_1}$ and the point $\{ x\}=D_{i_1}\cap D_{i_2}$. The curve 
$C$ is smooth and the intersection $D_{i_1}\cap D_{i_2}$ is a point because of 
conditions $(1)$ and $(2)$. By 
\cite[Proposition 6.1]{LM08}, the Okounkov body 
$\Delta_{(C,x)}(S;D)$ of $D$ with respect to the flag $(C,x)$ is equal to 
$\psi_{\R}(P(D))$ where the map $\psi_{\R}: M_{\R}\rightarrow \R^2$ is defined as 
follows
\[
\psi_{\R} (u) \ = \ (\langle u,v_{i_1}\rangle,\langle u,v_{i_2}\rangle) \textup{ for any } u\in M_{\R}.
\]
In our case $\psi_{\R}\equiv \textup{id}_{\R}$, so $\Delta_{(C,x)}(S;D)=
P(D)=\Delta$ by construction. This completes the proof of Theorem B.
\end{proof}

It is now natural to ask the following question: which of these polygons is
the Okounkov body of a rational divisor? The above toric-surface construction implies 
that any polygon of the form considered in Theorem B which is 
given by rational data is the Okounkov body of a rational divisor. The next 
result provides  a partial converse to the effect that the rationality
of the divisor imposes strong rationality conditions on the  points of the
Okounkov body. 

\begin{proposition}\label{prop2}
Let $S$ be a smooth projective surface, $D$  a big rational divisor on $S$ and 
$(C,x)$ be an admissible flag on $S$. Then 
\begin{enumerate}
\item all the vertices of the polygon $\Delta(D)$ contained in the 
set $\{[\nu,\mu)\times \R\}$ have rational coordinates.
\item $\mu (D)$ is either rational or satisfies a quadratic equation over $\Q$. 
\item If an irrational number $a>0$ satisfies a quadratic equation over $\Q$ 
and the conjugate $\overline{a}$ of $a$ over $\Q$ is strictly larger than $a$,
 then there exists a smooth, projective surface $S$, an ample $\Q$-divisor $D$ 
and an admissible flag on $S$ such that $\mu(D)=a$.
\end{enumerate}
\end{proposition}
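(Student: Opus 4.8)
The plan is to prove the three parts of Proposition~\ref{prop2} using the explicit description of $\alpha$, $\beta$ and $N'_s$ from Proposition~\ref{prop1}, together with the numerical characterisation of $\mu(D)$.

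\textbf{Part (1).} I would work on the interval $[0,\mu-\nu)$ in the $s$-coordinate, i.e.\ on $[\nu,\mu)$ in the $t$-coordinate. By Proposition~\ref{prop1}, on each piece $[p_i,p_{i+1}]$ we have $N'_s = A_i + s B_i$ with $B_i$ rational; moreover $A_i$ is determined by the linear equations $A_i\cdot C_j = D'\cdot C_j$ and $D' = D - \mu C$. The subtlety is that $D'$ itself involves $\mu$, which may be irrational. However, on the \emph{last} subinterval before $\mu$, i.e.\ near $s=\mu-\nu$, one should instead write $D'_s = D - tC$ with $t = \mu - s$, so that $N_t = A'_i + tB'_i$ where now $A'_i, B'_i$ are determined by equations involving only $D$ and the (fixed, rational) curves $C_j$. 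Since $D$ is rational and the intersection matrix of the relevant $C_j$ is an integral non-degenerate matrix, both $A'_i$ and $B'_i$ are \emph{rational} divisors, hence $\alpha(t) = \mathrm{ord}_x(N_t|_C)$ and $\beta(t) = \alpha(t) + P_t\cdot C = \alpha(t) + (D_t - N_t)\cdot C$ are affine functions of $t$ with rational coefficients on each piece. The break-points $p_i$ strictly less than $\mu$ are exactly the values where one component $C_i$ leaves the support; at such a point the coefficient of $C_i$ in $N_t$ vanishes, and since that coefficient is a rational affine function of $t$ with nonzero (rational) slope, the break-point is rational. The vertices of $\Delta(D)$ in $\{[\nu,\mu)\times\R\}$ sit over these rational $t$-values, at heights given by rational affine functions, so they have rational coordinates. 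I would also need to note $\nu$ itself is rational, being the coefficient of $C$ in $N(D) = N_0$, which by the same argument (intersection equations against the components of $N(D)$, with $D$ rational) is a rational divisor.

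\textbf{Part (2).} Here the point is that $\mu = \mu(D;C)$ is characterised by a numerical positivity condition: $D - tC$ is big for $t < \mu$ and pseudo-effective but not big at $t = \mu$, which (on a surface) means $\mathrm{vol}(D - tC) = 0$ at $t = \mu$, equivalently $P(D-\mu C)^2 = 0$. On the final linear piece $[p_k, \mu]$ in the $t$-coordinate, we have $N_t = A'_k + tB'_k$ with rational $A'_k, B'_k$, hence $P_t = D_t - N_t = (D - A'_k) - t(C + B'_k)$ is an affine-linear (in $t$) family of $\R$-divisors with rational coefficients at $t = p_k$ and rational "slope" divisor. Then $P_t^2$ is a quadratic polynomial in $t$ with rational coefficients, and $\mu$ is a root of it. Therefore $\mu$ is either rational or a quadratic irrational over $\Q$. (One should check the leading coefficient $(C+B'_k)^2$ is not forced to vanish, but if it does vanish then $P_t^2$ is linear with rational coefficients and $\mu$ is rational.) I expect this is the cleanest of the three parts.

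\textbf{Part (3).} This is the construction direction and I expect it to be the main obstacle, since one must realise a prescribed quadratic irrational $a$ with $\overline a > a$. The natural approach is via Zariski-chamber geometry, e.g.\ on a blow-up of $\P^2$ at points. Take $S$ the blow-up of $\P^2$ at two suitably chosen points (or a Hirzebruch-type surface), with exceptional/strict-transform configuration so that the negative part of $D - tC$ acquires exactly one relevant component $E$ whose coefficient dies at $t = a$; then $a$ is the root of a quadratic coming from the condition $P(D-tC)^2 = 0$ (or from $(D-tC)\cdot E$ changing sign against a curve of negative self-intersection). Concretely, if $E$ has $E^2 = -d$ and $C\cdot E = c$, $D\cdot E = e$, the equation governing when $E$ enters is $e - ct + (\text{coeff})\cdot(-d) \ge 0$ type relations; chaining two such curves gives a genuine quadratic. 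The condition $\overline a > a$ is exactly what is needed for $a$ to lie to the left of its conjugate, so that $\mu(D) = a$ rather than the conjugate; this should translate into an inequality on the surface's intersection numbers that one arranges by choosing the blow-up data. To get $D$ \emph{ample}, I would start from an ample class and perturb, using the openness of the ample cone, and choose $C$ a smooth very ample curve through a general point $x$. The hard part is organising the combinatorics of the blow-up so that the quadratic produced has the prescribed discriminant and the correct ordering of roots; I would handle it by reducing to a normal form for quadratic irrationals ($a = \frac{p+\sqrt{q}}{r}$ with $\overline a = \frac{p - \sqrt q}{r}$, and $\overline a > a$ meaning $\sqrt q < 0$... so in fact $\overline a > a \iff$ the irrational part has a specific sign) and reverse-engineering the intersection data $E^2, C\cdot E, D\cdot E$ accordingly, verifying at the end that the resulting $D$ can be taken ample.
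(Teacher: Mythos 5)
Your parts (1) and (2) are correct, and they take a somewhat different (arguably more self-contained) route than the paper. For (1), the paper deduces rationality of the break-points from the fact that the Zariski chamber decomposition of the big cone is locally finite \emph{rational} polyhedral (\cite[Theorem 1.1]{BKS04}), whereas you extract it directly from the linear-algebra description in Proposition~\ref{prop1}; your rewriting in the $t$-variable, so that $N_t$ is determined by $N_t\cdot C_j=(D-tC)\cdot C_j$ with $D$, $C$, $C_j$ rational, correctly removes the apparent dependence on the possibly irrational $\mu$, and the ``nonzero slope'' claim is easily justified: if the coefficient of $C_i$ were constant on the adjacent subinterval it could not be positive there and tend to $0$ at the break-point (continuity of Zariski decomposition, \cite[Proposition 1.14]{BKS04}). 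For (2), the paper argues via the rationality of $\textrm{vol}(D)$ and the area of the polygon; your argument that $P_t^2$ is a rational quadratic polynomial on the last linear piece, not identically zero since $P_t^2=\textrm{vol}(D-tC)>0$ for $t<\mu$, and vanishing at $t=\mu$, is a clean equivalent and the degenerate-degree cases are handled as you indicate.

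Part (3), however, has a genuine gap, and it is exactly the part where the real content lies. A blow-up of $\P^2$ at two points, or a Hirzebruch surface, has a \emph{rational polyhedral} pseudo-effective cone; since $\mu(D;C)=\sup\{t \mid D-tC \text{ big}\}$ is the parameter at which the rational line $t\mapsto D-tC$ exits that cone, for rational $D$ and $C$ the value $\mu$ is forced to be rational on any such surface. No arrangement of negative curves or ``chaining'' of linear conditions of the form $(D-tC)\cdot E\geq 0$ can produce an irrational $\mu$ there: each such wall is a rational linear condition in $t$. To realise a prescribed quadratic irrational one needs a surface whose pseudo-effective cone has a genuinely quadratic boundary piece through which the ray exits, and one needs enough freedom in the lattice to prescribe the discriminant. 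This is what the paper supplies: by Morrison's theorem \cite{Morrison} every even lattice of signature $(1,2)$ is the Picard lattice of a K3 surface of Picard number $3$, and after multiplying the form by $4$ Cutkosky's argument \cite{C} shows there are no negative curves, so $\overline{\Eff}(S)=\Nef(S)$ is the round cone $\{\alpha \mid (\alpha^2)\geq 0,\ (h\cdot\alpha)>0\}$. Then for ample $D$ and an irreducible curve $C$ one gets
\[
\mu(D)\equ\frac{(D\cdot C)-\sqrt{(D\cdot C)^2-(D^2)(C^2)}}{(C^2)},
\]
the \emph{smaller} root of $((D-tC)^2)=0$; this is precisely where the hypothesis $\overline{a}>a$ enters (your heuristic about ``the irrational part having a specific sign'' is this statement, but your proposal never pins it down), and the inverse problem is solved by choosing the quadratic form and the classes of $D$ and $C$ to produce the desired root. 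Without an input of this kind (a non-polyhedral effective cone with controllable quadratic data), the construction you sketch cannot get off the ground.
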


\begin{proof}
The number $\nu$ is rational because the positive and negative parts of the Zariski decomposition 
of a $\Q$-divisor are rational: it follows that $\alpha (\nu )$ and
$\beta (\nu )$ are rational. It follows from the proof of 
Proposition~\ref{prop1} that  the break-points of $\alpha$ and $\beta$ occur at 
points $t_i$ which are intersection points between the line $D-tC$ and 
faces of the Zariski chamber decomposition of the cone of big divisors  
\cite{BKS04}. However, it is proved in  \cite[Theorem 1.1]{BKS04} that this decomposition is
locally finite rational polyhedral, and hence  the break-points of 
$\alpha$ and $\beta$ occur at rational points.

For $(2)$, notice that the volume $\textrm{vol}_X(D)$, which is half of the area of the 
Okounkov polygon $\Delta(D)$, is rational (see \cite[Corollary 2.3.22]{La04}). As the  slopes and intermediate breakpoints of $\Delta(D)$ are rational, the equation  computing the area of $\Delta (D)$ gives a quadratic equation for $\mu(D)$ with
 rational coefficients. Note that if $\mu$ is irrational  then one edge of 
the polygon $\Delta (D)$ must sit on the vertical line $t=\mu$.

The final part of the proposition follows from a
 result of Morrison's \cite{Morrison} which states 
that any even integral quadratic form $q$ of signature $(1,2)$ occurs as the 
self-intersection form of a K3 surface $S$ with Picard number $3$. An argument 
of Cutkosky's \cite[Section 3]{C} shows that if the coefficients of the form are 
all divisible by $4$, then the pseudo-effective and nef cones of $S$ coincide 
and are given by 
\[
\{ \alpha\in N^1(S)\,|\, (\alpha^2)\geq 0\, ,\, (h\cdot\alpha)>0 \}
\]
for any ample divisor $h$ on $S$. If $D$ is an ample divisor and 
$C\subseteq S$ an irreducible curve (not in the same class as $D$), then the function $f(t)\deq ((D-tC)^2)$ 
has two positive roots and $\mu(D)$ with respect to $C$ is equal to the 
smaller one, i.e.
\[
 \mu(D) \equ \frac{(D\cdot C)-\sqrt{(D\cdot C)^2-(D^2)(C^2)}}{(C^2)}\ .
\]
Since we are only interested in the roots of $f$ we can start with any 
integral quadratic form of signature $(1,2)$ and multiply it by $4$. Hence we 
can exhibit any number with the required properties as $\mu(D)$ for suitable 
choices of the quadratic form, $D$, and $C$.
\end{proof}
\begin{remark}\label{seshadri}
It was mentioned in passing in \cite{LM08} that the knowledge of all Okounkov bodies 
determines Seshadri constants. In the surface case, there is a link between the
irrationality of $\mu$ for certain special forms of the flag and that of 
Seshadri constants. Let $D$ be an ample divisor on $S$, and let 
$\pi:\widetilde{S}\to S$ be the blow-up of a point $x\in S$ with exceptional 
divisor $E$. Then the Seshadri constant of $D$ at $x$ is defined by
\[
\epsilon (D,x)\deq \sup\{t\in \R\,|\, \pi^*(D)-tE \textrm{ is nef in } 
\widetilde{S}\}.
\]
We note that if $\epsilon (D,x)$ is irrational, then $\epsilon(D,x)=
\mu(\pi^*(D))$ with respect to any flag of the form $(E,y)$. Indeed, the 
Nakai--Moishezon criterion implies that either there is a curve 
$C\subseteq \widetilde{S}$ such that $C\cdot(\pi^*(D)-\epsilon E)=0 $ or 
$((\pi^*(D)-\epsilon E)^2)=0$. But since $C\cdot \pi^*(D)$ and 
$C\cdot E$  are both rational, $C\cdot(\pi^*(D)-\epsilon E)=0 $ is impossible if
$\epsilon$ is irrational. Therefore  $(\pi^*(D)-\epsilon E)^2=0$, hence 
$\pi^*(D) -\epsilon E$ is not big and therefore $\epsilon=\mu\notin\Q$. 
\end{remark}

\section{Non-polyhedral Okounkov bodies}

In this section we will give two examples of non-polyhedral Okounkov bodies of
divisors on Mori dream space varieties, thereby showing in particular that 
ample divisors can nevertheless have non-polyhedral Okounkov bodies. The first example is Fano; the second one is not, 
but has the advantage that the non-polyhedral shape of Okounkov bodies is
stable under generic perturbations of the flag.

\begin{proposition}\label{slices}
Let $X$ be a smooth projective variety of dimension $n$ equipped with an admissible flag $Y_{\bullet}$. 
Suppose that $D$ is a divisor such that $D-sY_{1}$ is ample. Then we have the following
lifting property
\[ \Delta_{Y_{\bullet}}(X;D)\cap \big(\{s\}\times \mathbb{R}^{n-1}\big)= 
\Delta_{Y_{\bullet}}(Y_1;(D-sY_{1})|_{Y_{1}}).\]  
In particular, if
$\overline{\textup{Eff}}(X)_{\mathbb{R}} = \textup{Nef}(X)_{\mathbb{R}}$
then on setting $\mu (D;Y_1)=\textup{sup}\{ \ t>0 \ | \ D-tY_{1}\mbox{ ample }\}$ we have that the Okounkov body
$\Delta_{Y_{\bullet}}(X;D)$ is the closure in $\R^n$ of the following set
\[ 
\{ (s,\underline{v}) \ | \ 0\leq s< \mu (D;Y_1), \underline{v}\in
\Delta_{Y_{\bullet}}(Y_1;(D-sY_{1})|_{Y_{1}}\}
\]
\end{proposition}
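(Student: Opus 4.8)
The plan is to prove the slice formula first and then deduce the global description as a formal consequence. For the slice formula, I would argue that the valuation $\nu_{Y_\bullet}$ has first coordinate $\nu_1(D') = \mathrm{ord}_{Y_1}(D')$, so the slice $\Delta_{Y_\bullet}(X;D)\cap(\{s\}\times\R^{n-1})$ is controlled by sections of $\mathcal{O}_X(mD)$ vanishing to order exactly $ms$ along $Y_1$, together with their restrictions to $Y_1$. The natural tool is the restricted linear series exact sequence: twisting the ideal sheaf sequence of $Y_1$ gives, for each $m$ and each integer $j$,
\[
0 \to H^0(X,\mathcal{O}_X(mD-(j+1)Y_1)) \to H^0(X,\mathcal{O}_X(mD-jY_1)) \to H^0\big(Y_1,\mathcal{O}_{Y_1}(mD-jY_1)\big).
\]
Since $D - sY_1$ is ample for $s$ in the relevant range, I would pick a sequence of rationals $j/m \to s$ and use Serre vanishing / the ampleness of $(D-sY_1)|_{Y_1}$ to show that the restriction maps are \emph{surjective} for $m\gg 0$. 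This identifies, in the limit, the points of $\Delta_{Y_\bullet}(X;D)$ with first coordinate $s$ with the body built out of the images $\nu_{Y_{2,\bullet}}$ of $H^0(Y_1,\mathcal{O}_{Y_1}(m(D-sY_1)|_{Y_1}))$, i.e. with $\Delta_{Y_\bullet}(Y_1;(D-sY_1)|_{Y_1})$. This is essentially the "slices of Okounkov bodies" computation from \cite{LM08}, specialized to the situation where the twisted restriction stays ample; I would cite the relevant statement there rather than redo the limit-of-semigroups argument in full.

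For the "in particular" clause, suppose $\overline{\mathrm{Eff}}(X)_\R = \mathrm{Nef}(X)_\R$. Then a divisor is big if and only if it is ample, so
\[
\mu(D;Y_1) \;=\; \sup\{t>0 \mid D - tY_1 \text{ ample}\} \;=\; \sup\{t>0 \mid D - tY_1 \text{ big}\},
\]
and for every $s \in [0,\mu(D;Y_1))$ the class $D - sY_1$ is ample, so the slice formula applies and gives $\Delta_{Y_\bullet}(X;D)\cap(\{s\}\times\R^{n-1}) = \Delta_{Y_\bullet}(Y_1;(D-sY_1)|_{Y_1})$. Meanwhile $D - sY_1$ is not big for $s \geq \mu(D;Y_1)$, so $\Delta_{Y_\bullet}(X;D)$ is contained in $[0,\mu(D;Y_1)]\times \R^{n-1}$, and its interior lies over $[0,\mu(D;Y_1))$. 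Since $\Delta_{Y_\bullet}(X;D)$ is closed and convex, it is the closure of its interior, hence the closure of $\bigcup_{0\le s<\mu(D;Y_1)} \{s\}\times \Delta_{Y_\bullet}(Y_1;(D-sY_1)|_{Y_1})$, which is exactly the asserted set.

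The main obstacle is the surjectivity of the restriction maps in the limit: one must make sure that the ampleness of $D - sY_1$ (not merely its bigness) is genuinely used to kill the obstruction groups $H^1(X,\mathcal{O}_X(mD-(j+1)Y_1))$ uniformly enough to conclude that \emph{every} point of the slice of $\Delta_{Y_\bullet}(Y_1;(D-sY_1)|_{Y_1})$ is attained, not just a convex subset of it. A secondary point of care is that $s$ ranges over an interval rather than a single rational value, so the vanishing must be arranged along a sequence $j_m/m \to s$ with $j_m/m$ staying in the open ampleness range; this is where one appeals to the openness of the ample cone and to Serre vanishing applied with a fixed ample divisor dominating $D - sY_1$. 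Both issues are handled in \cite{LM08}, so in the write-up I would isolate the statement needed from there and keep the argument here short.
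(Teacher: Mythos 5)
Your proposal takes essentially the same route as the paper: the slice is identified with the restricted Okounkov body via the slice theorem of \cite{LM08} (your limit-of-semigroups step), and the ampleness of $D-sY_1$ plus Serre vanishing makes the restriction maps surjective so that the restricted body equals $\Delta_{Y_\bullet}(Y_1;(D-sY_1)|_{Y_1})$, with rational approximation/continuity in $s$ handling irrational slices, exactly as in the paper's argument. The deduction of the ``in particular'' clause by a closure argument is also the same, so I see no gap.
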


\begin{proof}
In order to prove the lifting property we will use \cite[Theorem 4.26]{LM08}, 
which in our context states that
\[
\Delta_{Y_{\bullet}}(X;D)\cap \big(\{s\}\times \mathbb{R}^{n-1}\big) \ = \ \Delta_{Y_{\bullet}}( X|Y_1,D-sY_1)
\]
where the second body is the restricted Okounkov body defined in \cite[Section 2.4]{LM08}. Hence it is enough to show that
\begin{equation}\label{restricted}
\Delta_{Y_{\bullet}}(X|Y_1, D-sY_1) \ = \ \Delta_{Y_{\bullet}}(Y_1,(D-sY_1)|_{Y_1})\ ,
\end{equation}
We will prove this for $s\in \Q_+$, as the general case follows from the 
continuity of slices of Okounkov bodies. Combining 
\cite[Theorem 4.26]{LM08} and \cite[Proposition 4.1]{LM08} we obtain that the 
restricted Okounkov body satisfies the required homogeneity condition, i.e.
\[
\Delta_{Y_{\bullet}}(X|Y_1, p(D-sY_1)) \ = \ p \Delta_{Y_{\bullet}}(X|Y_1,(D-sY_1) \text{ for all } p\in \mathbb{N} \ .
\]
The construction of restricted Okounkov bodies tells us that  (\ref{restricted}) will 
follow if one can check that $H^1(X,m(p(D-sY_1)-Y_1))=0$ for sufficiently large divisible 
$p,m\in\N$. As $D-sY_1$ is an ample divisor, this follows from Serre vanishing.
\end{proof}

\begin{corollary}\label{non-polyhedra}
Let $X$ be a smooth three-fold and $Y_{\bullet}=(X,S,C, x)$ an admissible flag 
on $X$. Suppose that $\overline{\textup{Eff}}(X)_{\mathbb{R}} = 
\textup{Nef}(X)_{\mathbb{R}}$ and $\overline{\textup{Eff}}(S)_{\mathbb{R}} = 
\textup{Nef}(S)_{\mathbb{R}}$. The Okounkov body of any ample divisor $D$ with 
respect to the admissible flag $Y_{\bullet}$ can be described as follows
\[
\Delta_{Y_{\bullet}}(X;D) \ = \ \{(r,t,y)\in\R^3 \ | \ 0\leq r \leq \mu (D;S), 0\leq t \leq f(r), 0\leq y\leq g(r,t) \ \} 
\]
where $f(r) = \textup{sup} \{ \ s>0 \ | \ (D-rS)|_S-sC \textup{ is ample }\}$ and $g(r,t)=(C.(D-rS)|_S)-t(C^2)$. 
(All  intersection numbers in the above formulae are defined with respect to
the intersection form on $S$.)
\end{corollary}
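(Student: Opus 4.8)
The plan is to iterate the lifting property from Proposition~\ref{slices} twice, descending the flag $Y_\bullet = (X, S, C, x)$ one step at a time, and then stitch the resulting slices together into the explicit polytope.

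First I would apply Proposition~\ref{slices} with $Y_1 = S$: since $D$ is ample on $X$ and $\overline{\textup{Eff}}(X)_\R = \textup{Nef}(X)_\R$, the proposition gives that $\Delta_{Y_\bullet}(X;D)$ is the closure of
\[
\{ (r,\underline{v}) \ | \ 0 \le r < \mu(D;S),\ \underline{v} \in \Delta_{(C,x)}(S; (D-rS)|_S) \}.
\]
Here I should check that for $0 \le r < \mu(D;S)$ the divisor $(D-rS)$ is ample on $X$, so that its restriction $(D-rS)|_S$ is ample on $S$; this is where the hypothesis $\overline{\textup{Eff}}(X)_\R = \textup{Nef}(X)_\R$ enters, ensuring the ``big'' threshold $\mu(D;S)$ coincides with the ``ample'' threshold and that $D - rS$ stays nef, hence ample, strictly below it. So the problem reduces to describing the surface Okounkov body $\Delta_{(C,x)}(S; (D-rS)|_S)$ for each fixed $r$.

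Second, for the surface $S$ equipped with the flag $(C,x)$ and the ample divisor $A_r := (D-rS)|_S$, I would again invoke Proposition~\ref{slices} (in the case $n=2$), using the second hypothesis $\overline{\textup{Eff}}(S)_\R = \textup{Nef}(S)_\R$. This writes $\Delta_{(C,x)}(S;A_r)$ as the closure of $\{(t,y) \ | \ 0 \le t < \mu(A_r;C),\ y \in \Delta(Y_2; (A_r - tC)|_C)\}$, where $Y_2$ is the point $x$; the Okounkov body of a divisor on a point is the single point $\{\deg\}$, so the inner body collapses to the interval $[0, \deg((A_r-tC)|_C)] = [0, (C \cdot (A_r - tC))] = [0, (C\cdot A_r) - t(C^2)]$, all intersection numbers on $S$. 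This identifies $f(r) = \mu(A_r;C) = \sup\{s>0 \ | \ A_r - sC \textup{ ample on } S\}$ and $g(r,t) = (C \cdot A_r) - t(C^2) = (C\cdot(D-rS)|_S) - t(C^2)$, matching the claimed formulas. Note $A_r - tC$ is nef, hence ample, for $0 \le t < \mu(A_r;C)$, so $(C\cdot(A_r-tC)) \ge 0$ and the interval is nonempty.

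Finally I would assemble the pieces: substituting the surface description back into the first lifting gives exactly
\[
\Delta_{Y_\bullet}(X;D) = \overline{\{(r,t,y) \ | \ 0 \le r < \mu(D;S),\ 0 \le t < f(r),\ 0 \le y \le g(r,t)\}},
\]
and taking the closure replaces the strict inequalities $<$ by $\le$, yielding the stated closed region. The main obstacle I anticipate is bookkeeping around the boundary and the continuity claims: one must verify that $f$ is continuous (so the slices vary continuously and the closure is genuinely the ``filled-in'' region with $\le$ everywhere) and that iterating Proposition~\ref{slices} is legitimate, i.e. that the ample-threshold hypotheses propagate correctly from $X$ to the restricted divisor on $S$ and then to $C$. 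Everything else is a direct two-fold application of the already-established lifting property.
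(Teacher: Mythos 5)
Your argument is correct and reaches the stated description, but the surface step is handled by a different lemma than in the paper. The paper's proof (see Remark~\ref{badshapes}(1)) applies Proposition~\ref{slices} only once, to slice in $r$, and then computes each two-dimensional slice $\Delta_{(C,x)}\big(S;(D-rS)|_S\big)$ directly from the Zariski-decomposition description of surface Okounkov bodies in \cite[Theorem 6.4]{LM08}: since $\overline{\Eff}(S)_{\R}=\Nef(S)_{\R}$, every pseudo-effective divisor on $S$ is nef, so the negative parts $N_t$ vanish, giving $\alpha\equiv 0$, $\beta(t)=\big((D-rS)|_S-tC\big)\cdot C$, and the big threshold equal to the ample threshold $f(r)$. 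You instead iterate Proposition~\ref{slices} a second time on $S$ with the flag $(C,x)$, reducing to the Okounkov body of $\big((D-rS)|_S-tC\big)|_C$ on the curve $C$ with respect to the point $x$, which is the interval $\big[0,\,(C\cdot(D-rS)|_S)-t(C^2)\big]$; this is also a valid route (the curve case is \cite[Example 1.13]{LM08} or an easy Riemann--Roch computation), and it is self-contained modulo that fact, whereas the paper's route is a one-line citation of the stronger surface result. One slip in wording: ``the Okounkov body of a divisor on a point is the single point $\{\deg\}$'' is garbled --- the relevant inner body is the Okounkov body on the curve $C$ with respect to the flag $\{x\}$, and it is the whole interval $[0,\deg]$, which is in fact what you use. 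Your closing bookkeeping is fine: $f(r)$ is the nef threshold along an affine segment, the set $\{(r,s)\,:\,(D-rS)|_S-sC\in\Nef(S)_{\R}\}$ is closed and convex, so $f$ is concave and continuous on $[0,\mu(D;S)]$, and hence the closure of the union of open slices from Proposition~\ref{slices} is exactly the closed region asserted in the corollary.
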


\begin{remark} \label{badshapes}
$(1)$ Corollary \ref{non-polyhedra} follows by combining Proposition~\ref{slices} and the 
description of the Okounkov body of divisors on surfaces given in \cite[Theorem 6.4]{LM08}.\\
$(2)$ In the context of Corollary~\ref{non-polyhedra} the data of the function
$f:[0,\mu (D;S)]\rightarrow \R_+$ can force the associated Okounkov bodies to 
be non-polyhedral. 
Note that $f(r)$ is the real number such that $(D-rS)|_S-f(r)C$ lies on the 
boundary of the pseudo-effective cone of $S$, which  under our 
assumptions coincides 
with the nef cone. The graph of $f(r)$ is therefore 
(an affine transformation of) the curve obtained by intersecting the boundary 
of $\textup{Nef}(S)_{\R}$ with the plane passing through $[D|_S]$, 
$[(D-S)|_S]$ and $[D|_S-C]$ inside the vector space $N^1(S)_{\R}$. If
the Picard group of $S$ has dimension at least three and 
the boundary of the nef cone of $S$ be defined by quadratic rather than
linear equations then this intersection will typically be a conic curve, not
piecewise linear.
\end{remark}

\begin{example}[Non-polyhedral Okounkov body on a Fano variety]
We set $X=\P^2\times\P^2$ and let $D$ be a divisor in the  linear series
$\mathcal{O}_{\P^2\times\P^2}(3,1)$. We set
\[
Y_{\bullet} \ : \ Y_0=\P^2\times\P^2 \ \supseteq \ Y_1=\P^2\times E \ \supseteq \ Y_2=E\times E\ \supseteq \ Y_3=C \ \supseteq \ Y_4=\{\text{pt}\}
\]
where $E$ is a general elliptic curve. Since  $E$ is general we have that
\[
\overline{\textup{Eff}}(E\times E)_{\mathbb{R}}=\textup{Nef}(E\times E)_{\mathbb{R}}=
\{ (x,y,z)\in \mathbb{R}^3 \ | \ x+y+z \geq 0, xy+xz+yz\geq 0\}
\]
under the identification 
\[
\R^3\rightarrow N^1(E\times E)_{\R}, \ (x,y,z)\rightarrow xf_1+yf_2+ z\Delta_E,
\]
where $f_1=\{\mbox{pt}\}\times E$, $f_2=E\times \{\mbox{pt}\}$ and $\Delta_E$ is the
diagonal divisor. Let $C\subseteq E\times E$ be a smooth general curve in 
the complete linear series $|f_1+f_2+\Delta_E|$ and let $Y_4$ be a general 
point on $C$. To prove that the Okounkov body 
$\Delta_{Y_{\bullet}}(X; D)$ is not polyhedral it will be enough to prove
that the slice $\Delta_{Y_{\bullet}} (X;D)\cap \{ 0\times \mathbb{R}^{2}\}$ is 
not polyhedral. Since 
$\overline{\textup{Eff}}(\P^2\times\P^2)_{\mathbb{R}}=
\textup{Nef}(\P^2\times\P^2)_{\mathbb{R}}$, Proposition \ref{slices} applies and 
it will be enough to show that
$\Delta_{Y_ {\bullet}}(Y_1;\mathcal{O}_{Y_1}(D))$ is not polyhedral.

The threefold $Y_1=\mathbb{P}^2\times E$ is homogeneous, so 
its nef cone is equal to its pseudo-effective cone: this cone is bounded by the rays 
$\mathbb{R}_+[\text{line}\times E] \ \text{ and } \ \mathbb{R}_+[\P^2\times \{\text{pt}\}]$.
We note that hypotheses of Corollary \ref{non-polyhedra} therefore apply to 
$Y_1$ equipped with the flag $(Y_2, Y_3, Y_4)$.  

Using the explicit description given above of $\textup{Nef}(Y_1)_{\R}$, we see that 
$\mu (\mathcal{O}_{Y_1}(D),Y_2)=1$. A simple calculation gives us
\[
g(r,t) \ = \ (C.(D|_{Y_1}-rY_2)|_{Y_2})-t(C^2) \ = \ 24-18r-6t.
\]
Let us now consider 
\begin{eqnarray}
f(r) & = & \text{sup}\{ s>0 \ | \ (D-rY_2)|_{Y_2}-sC \text{ is ample}\} 
\nonumber\\
& = & \text{sup}\{ s>0 \ | \ (9-9r-s)f_1+(3-s)f_2-s\Delta_E \text{ is ample }\}.\nonumber
\end{eqnarray}
After calculation, we see that for positive $s$ the divisor 
$(9-9r-s)f_1+(3-s)f_2-s\Delta_E$ is ample if and 
only if $s < (4-3r-\sqrt{9r^2-15r+7})$. Corollary \ref{non-polyhedra} therefore tells us that
the Okounkov body of $D$ on $Y_1$, $\Delta_{Y_{\bullet}}(Y_1;D)$, has the following description
\[ 
\{ (r,t,y)\in\mathbb{R}^3 | 0\leq r\leq 1, 
0\leq t\leq 4-3r-\sqrt{9r^2-15r+7}, 0\leq y\leq 24-18r-6t \}.
\]
As this body is non-polyhedral, the same can be said about the Okounkov body $\Delta_{Y_{\bullet}}(X;D)$.
\end{example}

In the following, we give an example of a Mori dream space such that the 
Okounkov body of a general ample divisor is non-polyhedral and remains so 
after generic deformations of the flag in its linear equivalence class. Our 
construction is 
based heavily on an example of Cutkosky's~\cite{C}. Cutkosky considers a K3 
surface $S$ whose N\'eron-Severi space $N^1(S)_\R$ is isomorphic to  
$\mathbb{R}^3$ with the lattice $\mathbb{Z}^3$ and the intersection form 
$q (x,y,z)= 4x^2-4y^2-4z^2$. Cutkosky shows that 
\begin{enumerate} 
\item The divisor class on $S$ represented by the vector $(1,0,0)$ corresponds 
to the class of a very ample line bundle, which embeds $S$ in $\P^3$ as a 
quartic surface. 
\item The nef and pseudo-effective cones of $S$ coincide, and a vector 
$(x,y,z)\in\R^3$ represents a nef (pseudo-effective) class if it satisfies the 
inequalities
\[ 
4x^2-4y^2-4z^2\geq 0\ ,\  x \geq 0\ . 
\] 
\end{enumerate}
We consider the surface $S\subset \mathbb{P}^3$, and the pseudo-effective 
classes on $S$ given by $\alpha=(1,1,0)$ and $\beta=(1,0,1)$. By Riemann-Roch 
we have that $H^0(S,\alpha)\geq 2$ and $H^0(S, \beta)\geq 2$, so both $\alpha$ 
and $\beta$, being extremal rays in the effective cone, are classes of 
irreducible moving curves. Since 
$\alpha^2=\beta^2=0$, both these families are base-point free,  and it follows 
from the  base-point free Bertini theorem (see page 109 in \cite{flenneretal}) 
that there are smooth irreducible curves $C_1$ and $C_2$ representing $\alpha$ 
and $\beta$ respectively, which are elliptic by the adjunction formula. We may 
assume that $C_1$ and $C_2$ meet transversally in $C_1\cdot C_2=4$ points. 

Our threefold $Z$ is constructed as follows. Let $\pi_1: Z_1\rightarrow 
\mathbb{P}^3$ be the blow-up along the curve $C_1\subseteq \P^3$. We then
define $Z$ to be the blow up of the strict transform 
$\overline{C}_2\subseteq Z_1$ of the curve
$C_2$. Let $\pi_2: Z\rightarrow Z_1$ be the second blow-up and $\pi$ the composition $\pi_1\circ \pi_2: Z\rightarrow \mathbb{P}^3$. We 
denote by $E_2$ the exceptional divisor of $\pi_2$ and by $E_1$ the strict 
transform of the exceptional divisor of $\pi_1$ under $\pi_2$. We have the 
following proposition.
\begin{proposition}\label{MDS} 
The variety $Z$ defined above is a Mori dream space, and $-K_Z$ is effective. 
Given any two ample divisors on $Z$, $L$ and $D$, 
such that the classes $[D], [L], [-K_Z]$ are linearly
independent in $N^1(Z)_{\R}$, the Okounkov body $\Delta_{Y_{\bullet}}(X;D)$
 is non-polyhedral with respect to any admissible flag 
$(Y_1, Y_2, Y_3)$ such that $\mathcal{O}_Z(Y_1)=-K_Z$, ${\rm Pic}(Y_1)=
\langle H, C_1, C_2\rangle$  and 
$\mathcal{O}_{Y_1}(Y_2)=L|_{Y_1}$, where $H$ is the pullback of a hyperplane section of $\P^3$ by the map $\pi$. 
\end{proposition}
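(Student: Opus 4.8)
I will reduce the statement to a computation on the anticanonical $K3$ surface $Y_1$ by slicing with Proposition~\ref{slices}, and then produce a genuinely curved one-dimensional face of $\Delta_{Y_{\bullet}}(Z;D)$ arising from a nondegenerate conic section of the circular nef cone of $Y_1$. First I would fix the anticanonical geometry. Applying the blow-up formula to $\pi_1$ and then to $\pi_2$ gives $K_Z=-4H+E_1+E_2$; since $C_1,C_2\subseteq S$, the strict transform $\tilde S$ of the quartic $S$ under $\pi$ satisfies $\pi^*S=\tilde S+E_1+E_2$, so $-K_Z=4H-E_1-E_2=[\tilde S]$, which in particular shows that $-K_Z$ is effective. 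By adjunction every smooth $Y_1\in|-K_Z|$ is a $K3$ surface with $-K_Z|_{Y_1}=N_{Y_1/Z}$, hence $(D-sY_1)|_{Y_1}=D|_{Y_1}+s\,K_Z|_{Y_1}$ for all $s$. The restriction map $r\colon N^1(Z)_{\R}\to N^1(Y_1)_{\R}$ is surjective by the hypothesis ${\rm Pic}(Y_1)=\langle H,C_1,C_2\rangle$, and therefore an isomorphism since both sides have rank $3$. Under $r$ the intersection form of $Y_1$ becomes a rank-$3$ lattice form of signature $(1,2)$ all of whose values are divisible by $4$ (one computes $(-K_Z)^3=8$, and the remaining intersection numbers similarly); thus $Y_1$ has no $(-2)$-classes, hence no $(-2)$-curves, and by Cutkosky's argument $\textup{Nef}(Y_1)_{\R}=\overline{\textup{Eff}}(Y_1)_{\R}$ equals the circular cone $\{\,\xi : q(\xi)\geq 0,\ \xi\cdot h>0\,\}\cup\{0\}$, where $q$ is the intersection form and $h$ any ample class. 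Finally, linear independence of $[D],[L],[-K_Z]$ transfers under $r$ to that of $D|_{Y_1},K_Z|_{Y_1},L|_{Y_1}$, while $D|_{Y_1}$ and $L|_{Y_1}$ remain ample.

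Since $D$ is ample there is an $\epsilon>0$ with $D-sY_1$ ample on $Z$ for all $s\in[0,\epsilon)$, so Proposition~\ref{slices} applies and, for such $s$,
\[
\Delta_{Y_{\bullet}}(Z;D)\cap\big(\{s\}\times\R^2\big)\ =\ \Delta_{(Y_2,Y_3)}\big(Y_1;\,M_s\big),\qquad M_s:=(D-sY_1)|_{Y_1}=D|_{Y_1}+s\,K_Z|_{Y_1},
\]
which is the Okounkov body of a big $\R$-divisor on the surface $Y_1$. Because $\textup{Nef}(Y_1)=\overline{\textup{Eff}}(Y_1)$, every pseudo-effective class on $Y_1$ equals its own positive part, so the Zariski decompositions of $M_s$ and of $M_s-tY_2$ are trivial; feeding this into \cite[Theorem 6.4]{LM08} (see Section~2) gives $\nu=0$, $\alpha\equiv 0$ and $\beta(t)=(M_s-tY_2)\cdot Y_2$, so the slice is the trapezoid $\{(t,y) : 0\leq t\leq\mu_s,\ 0\leq y\leq M_s\cdot Y_2-t\,(Y_2^2)\}$, where $\mu_s:=\mu(M_s;Y_2)$.

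The heart of the argument is to show that $s\mapsto\mu_s$ is not piecewise linear on $[0,\epsilon)$. By definition $\mu_s$ is the value of $t$ at which the ray $t\mapsto M_s-tY_2$, which at $t=0$ lies in the interior of the circular cone of $Y_1$, first reaches the boundary $\{q=0\}$; equivalently it is the smallest positive root of the quadratic equation in $t$
\[
q\big(M_s-tY_2\big)\ =\ q\big(D|_{Y_1}+s\,K_Z|_{Y_1}-t\,L|_{Y_1}\big)\ =\ 0 .
\]
As $D|_{Y_1},K_Z|_{Y_1},L|_{Y_1}$ are linearly independent, $(s,t)\mapsto D|_{Y_1}+s\,K_Z|_{Y_1}-t\,L|_{Y_1}$ is an affine isomorphism onto an affine plane $\mathcal P\subseteq N^1(Y_1)_{\R}$ not passing through the origin, so the zero set of the displayed equation is, up to an affine change of coordinates, the section of the nondegenerate quadric cone $\{q=0\}$ by $\mathcal P$, i.e.\ a nondegenerate conic $\Gamma\subseteq\R^2$, nonempty because $(0,\mu_0)\in\Gamma$ with $\mu_0=\mu(D|_{Y_1};Y_2)>0$. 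At $(0,\mu_0)$ the line $\{s=0\}$ meets $\Gamma$ transversally (a ray issuing from an interior point of a convex cone exits it transversally), so near $s=0$ the conic $\Gamma$ is the graph of the real-analytic, non-affine function $s\mapsto\mu_s$; a nondegenerate conic contains no segment, so $\mu_s$ is not piecewise linear. Consequently the exposed face of $\Delta_{Y_{\bullet}}(Z;D)$ cut out by the coordinate hyperplane $\{\nu_3=0\}$ (Okounkov bodies lie in $\R^3_{\geq 0}$) contains the planar region $\{(s,t,0) : 0\leq s<\epsilon,\ 0\leq t\leq\mu_s\}$, which is bounded by the conic arc $t=\mu_s$ and is therefore not polyhedral; since a face of a polytope is a polytope, $\Delta_{Y_{\bullet}}(Z;D)$ is not polyhedral. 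Nothing above depended on the particular choice of $Y_1\in|-K_Z|$ satisfying the Picard condition, of $Y_2\in|L|_{Y_1}|$, or of the point $Y_3$, which is exactly the asserted stability under generic deformation of the flag.

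It remains to check that $Z$ is a Mori dream space. Although $\textup{Nef}(Z)$ is forced into $r^{-1}$ of the circular cone of $Y_1$, it is nonetheless rational polyhedral: the nef conditions imposed by irreducible curves of $Z$ contained in $Y_1$ are consequences of the finitely many linear conditions coming from the fibres and sections of $E_1,E_2$ and from lines pulled back from $\P^3$ (for instance $r$ carries the polyhedral cone $\{\,aH-bE_1-cE_2 : b,c\geq 0,\ a\geq 2b+2c,\dots\,\}$ into $\{q\geq 0\}$ because $b+c\geq\sqrt{b^2+c^2}$ when $b,c\geq 0$), and the attendant small modifications of $Z$ are governed by the two pencils of quadrics through $C_1$ and through $C_2$, in the spirit of \cite{C}. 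I expect this Mori-cone bookkeeping to be the most laborious part of the proof, whereas the conic-section mechanism is its conceptual core — and the decisive input is the Picard hypothesis on $Y_1$, which is precisely what forces $\overline{\textup{Eff}}(Y_1)$ to be circular and hence forces the slices of $\Delta_{Y_{\bullet}}(Z;D)$ to vary non-linearly.
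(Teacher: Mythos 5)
Your treatment of the non-polyhedrality statement is correct and is essentially the paper's own argument: restrict to the anticanonical K3 member $Y_1$, use the Picard hypothesis to see that the restriction $N^1(Z)_\R\to N^1(Y_1)_\R$ is an isomorphism and that every class on $Y_1$ has self-intersection divisible by $4$, conclude $\overline{\Eff}(Y_1)=\Nef(Y_1)$ is the round cone, slice $\Delta_{Y_\bullet}(Z;D)$ for small $s$ by Proposition~\ref{slices}, and observe that $s\mapsto\mu_s$ traces a plane section of the quadric cone by a plane missing the origin (this is where the linear independence of $[D],[L],[-K_Z]$ enters), hence a nondegenerate conic with no segments. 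Your extra details (injectivity of the restriction map, transversality at $(0,\mu_0)$, the trapezoidal shape of the slices) are fine and in places make explicit what the paper leaves implicit.

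However, there is a genuine gap: you do not prove that $Z$ is a Mori dream space, which is part of the assertion. Your final paragraph only sketches a strategy ("the nef cone is rational polyhedral\dots the small modifications are governed by the two pencils of quadrics\dots I expect this bookkeeping to be the most laborious part"), and the strategy itself is insufficient even if carried out: rational polyhedrality of $\Nef(Z)$ is neither verified nor enough to guarantee finite generation of the Cox ring. The paper's route is different and shorter: by \cite[Corollary 1.3.1]{BCHM} it suffices to exhibit an effective big dlt divisor $\Delta$ with $-K_Z-\Delta$ ample, and this follows once $-K_Z$ is shown to be big and nef. Nefness is proved by showing each $C_i\subset\P^3$ is a complete intersection of two quadrics $P_i,Q_i$ (a Riemann--Roch count gives $h^0(\mathcal{I}_{C_i}(2))\geq 2$), so the four quartics $P_1P_2,P_1Q_2,Q_1P_2,Q_1Q_2$ give sections of $-K_Z=\pi^*\OO_{\P^3}(4)-E_1-E_2$ whose common zeros confine the base locus to $\pi^{-1}(C_1\cap C_2)$, and then checking $-K_Z\cdot R_2=1$, $-K_Z\cdot(R_1-R_2)=0$ on the curves lying over the four points of $C_1\cap C_2$; bigness is proved by showing the rational map $\phi=[F:P_1P_2:P_1Q_2:Q_1P_2:Q_1Q_2]$ has three-dimensional image (via the factorisation of $\phi|_S$ through $\P^1\times\P^1$). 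None of this, nor any substitute finite-generation argument, appears in your proposal, so the Mori dream space claim remains unproved. (Your verification that $-K_Z$ is effective, via $-K_Z=4H-E_1-E_2=[\tilde S]$, is fine and agrees with the paper.)
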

\begin{remark}
The advantage of this example over the previous one is that it does not depend 
on a choice of flag elements which are exceptional from a Noether-Lefschetz 
point of view. (Note that by standard Noether-Lefschetz arguments the 
condition that $\textup{Pic}(Y_1)=\langle H, C_1, C_2\rangle$ holds for any
very general $Y_1$ in $|-K_Z|$.)

In particular, the previous example depended upon the fact that $Y_2$ had a 
non-polyhedral nef cone, which in this case was possible only because
$Y_2$ had Picard group larger than that of $X$: moreover, it was necessary to
take $Y_3$ to be a curve not contained in the image of the Picard group of 
$Y_1$.
It is to a certain extent less surprising that choosing flag elements in
$\textup{Pic}(Y_i)$ that do not arise by restriction of elements in 
$\textup{Pic}(Y_{i-1})$ should lead to bad behaviour in the Okounkov body. There
does not seem to be any reason why the fact that $X$ is Fano should influence
the geometry of the boundary of the part of the nef cone of $Y_i$ which does not
arise by restriction from $X$.

Moreover, such behaviour cannot be general, so there is little hope of using such
examples to construct a counter example to \cite[Problem 7.1]{LM08}.
\end{remark}
\begin{proof} 
We start by proving that $Z$ is a Mori dream space ($-K_Z$ is immediately
effective, since $\overline{S}$ is a section of $-K_Z$). 
By \cite[Corollary 1.3.1]{BCHM}, it is enough to find an effective big 
divisorial log terminal divisor $\Delta$ on $Z$ such that $-K_{Z}-\Delta$ is 
ample. The existence of such a $\Delta$ will follow if we can show that 
$-K_{Z}$ is big and nef. Indeed, there then exists an effective divisor $E$ 
such that 
$-K_{Z}-\epsilon E$ is ample for any sufficiently small $\epsilon$. We are
then done on setting $\Delta=\delta(-K_{Z})+\epsilon E$ for any sufficiently 
small $\delta$ and $\epsilon$. 

Let's show that $-K_{Z}$ is nef. The first idea we need is to prove that any base point of 
$\mathcal{O}_Z(-K_Z)$ must be contained in $\pi^{-1}(C_1\cap C_2)$. Note 
that $\pi_*\OO_Z(-K_{Z}) \equ \mathcal{O}_{\P^3}(4)\otimes 
\mathcal{I}_{C_1+C_2}$. We will now show that each $C_i$ is a complete intersection of two quadrics.
Note first that $C_i\subseteq \P^3$ is non-degenerate, since $\mathcal{O}_S(H-C_i)$ has 
self-intersection on $S$ equal to $-4$, and is therefore non-effective. 
We now consider the following exact sequence
\[ 0\rightarrow H^0(\mathcal{I}_{C_i}(2))\rightarrow H^0(
\mathcal{O}_{\mathbb{P}^3}(2))\rightarrow 
H^0(\mathcal{O}_{C_i}(2))\rightarrow .\]
Note that ${\rm dim} (H^0(\mathcal{O}_{\P^3}(2)))=10$ and ${\rm dim}(H^0(\mathcal{O}_{C_i}(2)))=8$
by Riemann-Roch. It follows that ${\rm dim}(H^0(\mathcal{I}_{C_i}(2)))\geq 2$,
so we can find linearly independent quadrics, $P_{i},Q_{i}$, which vanish along
$C_i$. As $C_i\subseteq \P^3$ is nondegenerate and of degree $4$, it must be 
the complete intersection of $P_{i}$ and $Q_{i}$. The pull-back to $Z$ of any
one of the polynomials $P_{1}P_{2},P_{1}Q_{2},Q_{1}P_{2},Q_{1}Q_{2}$ 
gives a section of $\mathcal{O}_Z(-K_Z)$,  so all the base points of
$\mathcal{O}_Z(-K_Z)$ are included in $\pi^{-1}(C_1\cap C_2)$.

To prove that $-K_Z$ is nef, it is therefore enough to check that the 
intersection of $-K_Z$ with any curve contained in $\pi^{-1}(C_1\cap C_2)$ is 
positive. Set $C_1\cap C_2=\{p_1,p_2,p_3,p_4\}$, and let $R_1$ (resp. $R_2$) be
the class of a curve  in the ruling of $E_1$ (resp. $E_2$). For any $i$ the set
$\pi^{-1}(p_i)$ is then the union of two  irreducible curves, one of class 
$R_2$ and the other of class $R_1-R_2$.  We have that 
$R_1\cdot H= R_2\cdot H= R_1\cdot E_2= R_2\cdot E_1=0$ and 
$R_1\cdot E_1=-1$, $R_2\cdot E_2=-1$. In particular, $-K_{Z}\cdot R_2=1$ 
and $-K_{Z}\cdot (R_1-R_2)=0$, so $-K_{Z}$ is nef (but not ample). 

It only remains  to prove  that $-K_Z$ is big. More explicitly, we show 
that the image of $\mathbb{P}^3$  under the rational map 
\[ 
\phi: \mathbb{P}^3\rat \mathbb{P}^4\ , \phi=[F:P_{1}P_{2}:P_{1}Q_{2}:
Q_{1}P_{2}:Q_{1}Q_{2}] 
\] 
is three-dimensional.  Here  $F$ is the polynomial defining the surface 
$S\subseteq \P^3$ and is hence an element of $H^0(\mathcal{O}_{\P^3}(4)\otimes\mathcal{I}_{C_1+C_2})$. 

We start by checking that the image of the restricted map $\phi|_S$ has 
dimension two. Observe  that $\phi|_S$ can be factored as
\[
f\circ (\phi_1\times\phi_2) \ : \ S \dashrightarrow \P^1\times\P^1 \rat\P^4\ ,
\] 
where $f([a:b],[c:d])=[0:ac:ad:bc:bd]$ and $\phi_i=[P_i:Q_i]$. The image of $f$ is of dimension $2$, 
thus it is enough to show that $\phi_1\times\phi_2$ is generically surjective. Both $P_1$ and $Q_1$ 
vanish on $S$ only along $C_1$, thus the general fiber of $\phi_1$ is in the 
class $(2,0,0)-(1,1,0)=(1,-1,0)$ and likewise the general fiber of $\phi_2$ is 
$(1,0,-1)$. Since $(1,0,-1)\not\geq (1,-1,0)$ and $(1,-1,0)\not\geq (1,0,-1)$
in $N^1(S)$, and $\phi_1$ and $\phi_2$ are individually generically surjective,
$\phi_1\times\phi_2$ is also generically surjective. The image of $\phi|_S$ is 
therefore two dimensional. 

It follows that either ${\rm Im}(\phi)$ is three dimensional or 
${\rm Im}(\phi)\subset \overline{{\rm Im}(\phi_S)}$. But if $p\not \in S$ 
then $F(x)\neq 0$ so $\phi(p)\not\in \overline{{\rm Im}(\phi_S)} $. 
Thus the image of $\phi$ is three dimensional and $-K_Z$ is big.

We now show that if $D$ and $L$ satisfy the given independence condition and
$\mathcal{O}_Z(Y_1)= -K_Z$ then $\Delta_{Y_{\bullet}}(Z;D)$ is non-polyhedral.
We start by proving that $V$, the space spanned by $\{ H, C_1, C_2\}$ on $Y_1$
has the same properties as $N^1(S)_{\R}$. For this notice that $Y_1$ is the strict transform of a smooth K3 surface containing 
both $C_1$ and $C_2$ and by assumption ${\rm Pic}(Y_1)=\langle H,C_1, C_2
\rangle$. Also, we have the following equalities of intersection numbers
\[
\langle C_1,C_2\rangle_{Y_1}=\langle C_1,C_2\rangle_S, \langle H,C_1\rangle_{Y_1}=\langle H,C_1\rangle_S, \langle H,C_2\rangle_{Y_1}=\langle H,C_2\rangle_S
\] 
so it remains true that for any integral class $C$ on $Y_1$ we have that
$4| C^2$. In particular, this implies there are no effective irreducible classes
on $Y_1$ with negative self-intersection so
\[ 
\overline{\rm Eff}(Y_1)= {\rm Nef}(Y_1)=\{ v\in {\rm Pic}(Y_1) \ | \ \langle v,v\rangle\geq 0,\langle v,H\rangle\geq 0\}.
\]
For small enough $r$ we have that $D-r Y_1$ is ample. It follows
by Proposition \ref{slices} that for any small enough $r$
\[ 
\Delta_{Y_{\bullet}}(Z;D)\cap \big(\{ r\}\times \mathbb{R}^2\big)=\Delta_{Y_{\bullet}}(Y_1, (D-r Y_1)|_{Y_1}).
\]
We now set
\[ 
f(r)= {\rm max}\{s \ | \ \Delta_{Y_{\bullet}}(Z;D)\cap \big((r,s)\times \mathbb{R}\big)\neq \emptyset\}
\]
and note that $f$ is piece-wise linear if $\Delta_{Y_{\bullet}}(Z;D)$ is a 
polyhedron. We note that for small values of $r$ we have, 
by the explicit description of Okounkov bodies of surfaces, that
\[
f(r) = \textup{sup} \{ \ s>0 \ | \ (D-rY_1)|_{Y_1}-sY_2 \in 
\overline{\textup{Eff}}(Y_1)_{\R}\}\ .
\]
But, as explained in Remark \ref{badshapes}, the graph of $f$ is then
an affine transformation of the intersection of the cone
$\overline{\rm Eff}(Y_1)$ with the plane passing through $D$, $D-Y_1$
 and $D-Y_2$. By hypothesis this plane does not pass thorough 0, so its 
intersection with the above cone is not piecewise linear. The Okounkov body
$\Delta_{Y_{\bullet}}(Z;D)$ is therefore non-polyhedral.
\end{proof}


\begin{thebibliography}{RWY}
\bibitem{LB01} L. B\u adescu, \emph{Algebraic surfaces}, Universitext, Springer-Verlag, New York, 2001.

\bibitem{BKS04}  T. Bauer , A. K\"uronya, T. Szemberg, \emph{Zariski chambers, volumes and stable base loci}, Journal f\"ur die reine und angewandte Mathematik \textbf{576} (2004), 209--233. 

\bibitem{BCHM} C.~Birkar, P.~Cascini, C.~Hacon, J.~McKernan, \emph{Existence of minimal models for varieties of log general type}, \texttt{arXiv:math.AG/0610203}.

\bibitem{C} S. Dale Cutkosky,  \emph{Irrational asymptotic behaviour of Castelnuovo--Mumford regularity},  Journal f\"ur die reine und angewandte Mathematik \textbf{522} (2000), 93--103.

\bibitem{flenneretal} H.~Flenner, L.~O'Carroll, W.~Vogel, \emph{Joins and intersections}.
Springer Monographs in Mathematics. Springer-Verlag, Berlin, 1999

\bibitem{Fu82} T. Fujita, {\it Canonical rings of algebraic varieties}.   
Classification of algebraic and analytic manifolds (Katata, 1982),  65--70, 
Progr. Math., {\bf 39}, Birkh\"auser Boston, Boston, MA, 1983. 

\bibitem{Gr}
A. Grothendieck, \emph{El\'ements de g\'eom\'etrie alg\'ebrique} IV 3, Publ.Math. IHES,
\textbf{28}, (1966).

\bibitem{HS04}
M. Haiman and B. Sturmfels, \emph{Multigraded Hilbert schemes},
J. Alg. Geom. \textbf{13}, no. 4, 725-769, 2004.

\bibitem{Ha77}
R. Hartshorne, \emph{Algebraic Geometry}, 
Graduate Texts in Mathematics, vol. \textbf{52}, Springer-Verlag, New York, 1977.


\bibitem{KK08}  K. Kaveh and A. Khovanskii, 
\emph{Convex bodies and algebraic equations on affine varieties},  \texttt{arXiv:0804.4095}. 

\bibitem{KLM09} A. K\" uronya, V. Lozovanu, C. Maclean, \emph{Volume functions of linear series}, \texttt{arXiv:1008.3986}.

\bibitem{La04}
R. Lazarsfeld, \emph{Positivity in Algebraic Geometry I-II},
Ergebnisse der Mathematik und ihrer Grenzgebiete. 3. Folge, vol. \textbf{48-49}, Springer-Verlag, Berlin
Heidelberg, 2004.

\bibitem{LM08}
R. Lazarsfeld and M. Musta\c t\u a, \emph{Convex bodies associated to linear series},
Ann. Scient. \' Ec. Norm. Sup., {\bf 4} s\' erie, t. {\bf 42}, (2009), pp 783-835.

\bibitem{Morrison} D.~R.~Morrison, \emph{On K3 surfaces with large Picard number}, Invent. Math. \textbf{75} (1984), 105--121.

\bibitem{Ok96} A. Okounkov, \emph{Brunn-Minkowski inequalities for multiplicities}, Invent. Math {\bf 125} 
(1996) pp 405-411. 

\bibitem{Ok03} A. Okounkov, \emph{Why would multiplicities be 
log-concave?} in The orbit method in geometry and physics, Progr. Math. {\bf 213}, 2003 pp 329-347. 

\bibitem{Sh94}
I. R. Shafarevich, \emph{Basic Algebraic Geometry: Varieties in Projective Space}, Springer, Berlin Heidelberg New York, 1994.

\bibitem{Z62} O. Zariski, \emph{The theorem of Riemann--Roch for high multiples of an effective 
divisor on an algebraic surface}. Annals of Mathematics (2) {\bf 56} (1962), 560--615. 
\end{thebibliography}
\end{document}